\journal{Journal of Mathematical Analysis and Applications}
\newtheorem{thm}{Theorem}[section]
\newtheorem{cor}[thm]{Corollary}
\newtheorem{lem}[thm]{Lemma}
\newtheorem{prop}[thm]{Proposition}
\newtheorem{rem}[thm]{Remark}
\newdefinition{exmp}[thm]{Example}
\newproof{proof}{Proof}
\newproof{proof_main}{The final step of the proof of Theorem~\ref{th:main}}
\newproof{proof7}{Proof of the formula~\eqref{lem7}}
\newproof{proof8}{Proof of the formula~\eqref{lem8}}
\newproof{proof10}{Proof of the formula~\eqref{lem10}}
\newcommand{\N}{\mathbb{N}}
\newcommand{\R}{\mathbb{R}}
\newcommand{\I}{\mathcal{I}}
\newcommand{\mr}{\mathscr{M}(\R)}
\newcommand{\br}{\mathscr{B}(\R)}
\newcommand{\st}{such that}
\newcommand{\cf}{cf.}
\newcommand{\eg}{e.g.}
\newcommand{\ie}{i.e.}
\newcommand{\itemref}[1]{\eqref{#1}} 
\renewcommand{\le}{\leqslant}
\renewcommand{\ge}{\geqslant}
\newcommand{\J}{\mathcal{J}}
\numberwithin{equation}{section}
\begin{document}

\begin{frontmatter}
\author{Kazimierz Nikodem}\ead{knikodem@ath.bielsko.pl}
\author{Teresa Rajba}\ead{trajba@ath.bielsko.pl}
\author{Szymon Wąsowicz}\ead{swasowicz@ath.bielsko.pl}
\address{%
 Department of Mathematics and Computer Science,
 University of Bielsko--Bia\l{}a,
 Willowa 2, 43--309 Bielsko--Bia\l{}a, Poland}
\title{On the classes of higher--order Jensen--convex functions and Wright--convex functions}
\begin{keyword}
 Convex functions of higher order\sep
 Jensen--convex functions of higher order\sep
 Wright--convex functions of higher order\sep
 forward difference\sep
 backward difference\sep
 Hamel basis\sep
 Dirac measure.
 \MSC Primary: 26A51. Secondary: 26D15, 39B62, 60E05.
\end{keyword}

\begin{abstract}
 The classes of $n$--Wright--convex functions and $n$--Jensen--convex functions are compared with each other. It is shown that for any odd natural number~$n$ the first one is the proper subclass of the second one. To reach this aim new tools connected with measure theory are developed.
\end{abstract}
\end{frontmatter}

\section{Introduction}

Let $\I\subset\R$ be the interval and $f:\I\to\R$. The usual \emph{forward difference} operator is denoted by
\[
 \Delta_h\,f(x)=f(x+h)-f(x),
\]
where $x\in\I$ and $h\in\R$ with $x+h\in\I$. Its iterates we define by the usual way, \ie
\[
 \Delta_{h_1\dots\,h_nh_{n+1}}\,f(x)=\Delta_{h_1\dots\,h_n}\bigl(\Delta_{h_{n+1}}\,f(x)\bigr)
\]
for $n\in\N$, $x\in\I$ and $h_1,\dots,h_n,h_{n+1}\in\R$ with all needed arguments belonging to~$\I$ (sometimes we will not write the evident assumptions of this kind). If all increments are equal, $h_1=\dots=h_n=h$, then we use the standard notation
\[
 \Delta_h^n\,f(x)=\Delta_{h\,\dots\,h}\,f(x)\,,
\]
where the increment~$h$~is taken~$n$ times. It is not difficult to check that
\begin{equation}\label{eq:delta_n}
 \begin{aligned}
  \Delta_{h_1\dots\,h_{n+1}}\,f(x)&=f(x+h_1+\dots+h_{n+1})\\
  &-\sum_{1\le j_1<\dots<j_n\le n+1} f(x+h_{j_1}+\dots+h_{j_n})\\
  &+\sum_{1\le j_1<\dots<j_{n-1}\le n+1} f(x+h_{j_1}+\dots+h_{j_{n-1}})\\
  &\phantom{\;}\,\vdots\\
  &+(-1)^n\sum_{1\le j_1\le n+1} f(x+h_{j_1})\\
  &+(-1)^{n+1}f(x)\,.
 \end{aligned}
\end{equation}
In this paper also the \emph{backward difference} will be used. It is defined by
\begin{equation}\label{eq:backward}
 \nabla_h\,f(x)=f(x)-f(x-h),
\end{equation}
where $x\in\I$ and $h\in\R$ with $x-h\in\I$. Its iterates are defined similarly to these of the forward differences. Obviously $\nabla_h\,f(x+h)=\Delta_h\,f(x)$ and using~\eqref{eq:delta_n}, by the induction argument we arrive at
\begin{equation}\label{eq:nabla_n}
 \nabla_{h_1\dots\,h_{n+1}}\,f(x+h_1+\dots+h_{n+1})=\Delta_{h_1\dots\,h_{n+1}}\,f(x)\,.
\end{equation}

Recall that $f$ is called \emph{Jensen--convex of order}~$n$ ($n$--Jensen--convex for short), if
\begin{equation}\label{eq:n_J_conv}
 \Delta_h^{n+1}f(x)\ge 0
\end{equation}
for all $x\in\I$ and $h>0$ with $x+nh\in\I$ (\cf\ \eg\ \cite{Kuc09}). Obviously for $n=1$ we arrive at the condition
\[
 \Delta_h^2\,f(x)=f(x+2h)-2f(x+h)+f(x)\ge 0
\]
for all $x\in\I$ and $h>0$ with $x+h\in\I$, which is equivalent to
\[
 f\biggl(\frac{x+y}{2}\biggr)\le\frac{f(x)+f(y)}{2},\quad x,y\in\I,
\]
\ie\ to the Jensen--convexity of~$f$.

The function $f$ is called \emph{Wright--convex} (\cf\ \cite{RobVar73}), if
\[
 f\bigl(tx+(1-t)y\bigr)+f\bigl((1-t)x+ty\bigr)\le f(x)+f(y)
\]
for all $x,y\in\I$, $t\in[0,1]$. This condition is equivalent to
\[
 \Delta_{h_1h_2}\,f(x)\ge 0
\]
for all $x\in\I$, $h_1,h_2>0$ with $x+h_1+h_2\in\I$ (see \cite{MakPal09}). Following this observation, in \cite{GilPal08} and \cite{MakPal09}, higher order Wright--convexity was defined: the function~$f$ is \emph{Wright--convex of order}~$n$ ($n$--Wright--convex for short), if
\begin{equation}\label{eq:n_W_conv}
 \Delta_{h_1\dots\,h_{n+1}}\,f(x)\ge 0
\end{equation}
for all $x\in\I$ and $h_1,\dots,h_{n+1}>0$ with $x+h_1+\dots+h_{n+1}\in\I$. Of course, setting above $h_1=\dots=h_{n+1}=h$, we obtain $\Delta_h^{n+1}f(x)\ge 0$, which means that every $n$--Wright convex function is $n$--Jensen convex.

Then the natural question arises, whether the converse is also true, \ie\ whether $n$--Jensen--convex functions are $n$--Wright--convex.  For $n=1$ the negative answer is not too difficult to give. Namely, the function $f:\R\to\R$ given by $f(x)=|a(x)|$, where $a:\R\to\R$ is a~discontinuous additive function, is Jensen--convex and it is not Wright--convex (\cf~\cite{Nik87}). Indeed, by the well--known Ng's representation (\cf~\cite{Ng87}), if~$f$ was Wright--convex, it would be the sum of an additive function and a~convex one. Then either $f$ would be continuous, or its graph would be dense on the whole plane (\cf~\eg~\cite{Kuc09}). But neither $f$ is continuous, nor~the graph of~$f$ is dense on the whole plane.

In the series of papers \cite{GilPal08,GilPal01,MakPal09} rather extensive study of higher--order Wright--convexity was given. However, the mentioned above problem was not considered. In this paper we fill this gap by delivering the negative answer for any odd positive integer~$n$. Let us emphasize that for (odd) $n>1$ the appropriate counterexample is not easy to construct, as it was for $n=1$, \ie\ in the case of the ordinary Jensen--convexity and Wright--convexity. To reach our goal we develop new tools of measure--theoretical nature, which, we hope, could be also useful for some future research. Let us also mention that for even natural numbers~$n$ the considered problem still remains open.

The paper is organized in the following way. In Section~\ref{sec:main} we formulate our main result and we prove a~part of it. In the next section, to throw some light to the nature of our main problem, we consider the case of $n$--Jensen--convexity and $n$--Wright--convexity for $n=3$. We also perform some considerations for $n=2$ to show that for even values of~$n$ our problem seems to be rather difficult. The nontrivial part of the proof of the main result is postponed to the last section.

\section{Main result}\label{sec:main}

Recall that for $x\in\R$~we have $x_+=\max\{x,0\}=\frac{x+|x|}{2}$ and $x_+^n=(x_+)^n$. We start with the following, well--known, lemma.
\begin{lem}\label{lem:cx+}
 Let $n\in\N$, $c\ge 0$. The function $\varphi:\R\to\R$ given by $\varphi(x)=cx_+^n$ is $n$--Jensen--convex.
\end{lem}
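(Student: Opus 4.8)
The plan is to reduce the claim to a known fact about the $(n+1)$-st iterated forward difference of the function $x\mapsto x_+^n$, and to exploit the structure of $\varphi$ as a truncated power. First I would recall that since $c\ge 0$ and the difference operator $\Delta_h^{n+1}$ is linear, it suffices to treat the case $c=1$, i.e. to show $\Delta_h^{n+1}\,x_+^n\ge 0$ for all $x\in\R$ and $h>0$. The key observation is that $x_+^n$ agrees with the ordinary monomial $x^n$ for $x\ge 0$ and vanishes for $x\le 0$, and that $x^n$ is a polynomial of degree exactly $n$, so $\Delta_h^{n+1}\,x^n=0$ identically. Hence the value of $\Delta_h^{n+1}\,x_+^n(x)$ comes only from the ``boundary'' between the two regimes, and the heart of the matter is to see that this boundary contribution is nonnegative.

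Concretely, I would write
\[
 \Delta_h^{n+1}\,\varphi(x)=\sum_{k=0}^{n+1}(-1)^{n+1-k}\binom{n+1}{k}(x+kh)_+^n
\]
from the standard expansion (a special case of~\eqref{eq:delta_n} with all increments equal to $h$). Fix $x\in\R$ and $h>0$; since $h>0$, exactly the indices $k$ with $x+kh>0$ survive, say those with $k\ge k_0$ for some threshold $k_0\in\{0,1,\dots,n+1\}$ (with the convention that $k_0=0$ if $x>0$ and $k_0=n+2$ if $x+(n+1)h\le 0$, in which case the sum is $0\ge0$ trivially). For $k\ge k_0$ we may replace $(x+kh)_+^n$ by $(x+kh)^n$. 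The difference of the full polynomial sum (over all $k$ from $0$ to $n+1$) and the truncated one is a sum over $k<k_0$ of $(-1)^{n+1-k}\binom{n+1}{k}(x+kh)^n$; since the full polynomial sum is $0$, we get
\[
 \Delta_h^{n+1}\,\varphi(x)=-\sum_{k=0}^{k_0-1}(-1)^{n+1-k}\binom{n+1}{k}(x+kh)^n
 =\sum_{k=0}^{k_0-1}(-1)^{n-k}\binom{n+1}{k}(x+kh)^n.
\]
Now each term has $x+kh\le 0$, so $(x+kh)^n=(-1)^n|x+kh|^n$, and the sign $(-1)^{n-k}(-1)^n=(-1)^{-k}=(-1)^k$ leaves us with $\sum_{k=0}^{k_0-1}(-1)^k\binom{n+1}{k}|x+kh|^n$; I would then rewrite this in a manifestly nonnegative form by recognizing it as (up to sign bookkeeping) an iterated difference of the convex function $t\mapsto|t|^n$ evaluated at equally spaced points, or alternatively invoke directly the classical fact that the $(n+1)$-st divided difference of $t\mapsto t_+^n$ at any increasing nodes is nonnegative because $t_+^n$ has a nonnegative (distributional) $n$-th derivative.

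The cleanest route, and the one I would actually write up, avoids the case analysis: use the integral/divided-difference representation. The $(n+1)$-st forward difference with step $h>0$ satisfies $\Delta_h^{n+1}\,g(x)=n!\,h^{n+1}[x,x+h,\dots,x+(n+1)h]g$, the divided difference on $n+2$ equally spaced nodes, and by the Hermite–Genocchi formula (or the Peano kernel theorem) this equals $h^{n+1}\int \dots$ of the $(n+1)$-st derivative of $g$ against a nonnegative kernel. For $g(t)=t_+^n$ one has $g^{(n)}(t)=n!\,\mathbf 1_{[0,\infty)}(t)$ in the a.e.\ sense, which is nonnegative, so the divided difference — hence $\Delta_h^{n+1}\,\varphi(x)$ — is $\ge 0$. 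I expect the main (only) obstacle to be purely expository: making the jump-at-the-kink argument rigorous without hand-waving about distributional derivatives. Since the statement is labelled ``well--known,'' I would keep the argument short, most likely citing \cite{Kuc09} for the fact that $x\mapsto x_+^n$ is a convex function of order $n$ (equivalently, that $\Delta_h^{n+1}\,x_+^n\ge0$), and simply remark that the general $c\ge0$ case follows by homogeneity and linearity of $\Delta_h^{n+1}$.
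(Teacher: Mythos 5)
Your proposal reaches the right conclusion, but it is worth noting that the paper's own proof is a one--liner of a different flavour: it observes that $\varphi^{(n-1)}(x)=cn!\,x_+$ is convex, invokes the classical criterion that a function whose $(n-1)$-st derivative exists and is convex is $n$--convex in the sense of Popoviciu (divided differences of order $n+1$ nonnegative), and notes that $n$--convexity trivially implies $n$--Jensen--convexity, citing \cite{Kuc09,Pop34}. Your fall--back option (``cite \cite{Kuc09} for the fact that $x\mapsto x_+^n$ is convex of order $n$'') is essentially this argument, so in its final intended form your write--up and the paper coincide in spirit; your elaborated routes are genuinely different and more self--contained, trading the derivative criterion for either a direct combinatorial reduction or the divided--difference/B--spline positivity of truncated powers.

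Two soft spots in the elaborated routes should be repaired before they count as proofs. First, in the combinatorial branch, after reducing to $\sum_{k=0}^{k_0-1}(-1)^k\binom{n+1}{k}\lvert x+kh\rvert^n$ you propose to ``recognize'' this as an iterated difference of $t\mapsto\lvert t\rvert^n$; that identification is not correct as stated, because the coefficients are $\binom{n+1}{k}$ rather than $\binom{k_0-1}{k}$ and the sum is only a partial segment of the full alternating sum, so no difference operator of order $k_0-1$ is being evaluated. The inequality is true, but this step is exactly the nontrivial content and is left unproved in that branch. Second, in the Hermite--Genocchi branch there is an off--by--one in the justification: the kernel representation of the $(n+1)$-st divided difference integrates the $(n+1)$-st derivative (for $g(t)=t_+^n$ this is the measure $n!\,\delta_0\ge 0$, not a function), or equivalently a Peano kernel of one order lower integrated against $dg^{(n)}$ with $g^{(n)}=n!\,\mathbf{1}_{[0,\infty)}$ \emph{nondecreasing}; mere nonnegativity of $g^{(n)}$, which is what you invoke, is not the hypothesis the formula uses. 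With either repair (Stieltjes/Peano kernel against the nondecreasing $g^{(n)}$, or simply the paper's derivative criterion applied to $\varphi^{(n-1)}=cn!\,x_+$), the argument closes; the reduction of $c\ge 0$ to $c=1$ by linearity of $\Delta_h^{n+1}$ is of course fine.
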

\begin{proof}
 It is easy to see that $\varphi^{(n-1)}(x)=cn!x_+$ is a~convex function, whence $\varphi$ is so--called $n$--convex function, which is obviously $n$--Jensen--convex (\cf~\cite{Kuc09,Pop34}).\qed
\end{proof}
\begin{cor}\label{cor:axn+}
 If $a:\R\to\R$ is an additive function and~$n$ is an odd natural number, then $f(x)=a(x)_+^n$ is $n$--Jensen--convex.
\end{cor}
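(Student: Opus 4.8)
The plan is to reduce the statement to Lemma \ref{lem:cx+} by exploiting the fact that an additive function $a:\R\to\R$ is $\Q$-linear, so it commutes with rational-coefficient linear combinations of its arguments — in particular with the increments appearing in the difference operator $\Delta_h^{n+1}$. First I would fix $x\in\R$ and $h>0$ and expand $\Delta_h^{n+1}f(x)$ using~\eqref{eq:delta_n} with all increments equal to $h$; every term has the form $f(x+kh)=a(x+kh)_+^n=\bigl(a(x)+ka(h)\bigr)_+^n$ for $k\in\{0,1,\dots,n+1\}$, by additivity. Writing $u=a(x)$ and $v=a(h)$, this shows that
\[
 \Delta_h^{n+1}f(x)=\sum_{k=0}^{n+1}(-1)^{n+1-k}\binom{n+1}{k}(u+kv)_+^n\,.
\]

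The next step is to recognize the right-hand side as a value of a finite difference of the function $\varphi(t)=t_+^n$. If $v>0$, the sum equals $\Delta_v^{n+1}\varphi(u)$, which is nonnegative by Lemma~\ref{lem:cx+} (with $c=1$) applied on $\I=\R$. If $v<0$, I would substitute $k\mapsto n+1-k$ to re-index the sum and rewrite it, using $(-1)^{n+1-k}=(-1)^{k}$ up to the overall sign, as $\Delta_{|v|}^{n+1}\varphi\bigl(u+(n+1)v\bigr)$ up to a factor $(-1)^{n(n+1)}=1$; here the oddness of $n$ is what guarantees the sign works out, since $\varphi(-t)=t_-^n$ differs from $\varphi(t)$ but the reflection $t\mapsto -t$ maps $\varphi$ to $x\mapsto(-x)_+^n=(-1)^n x_-^n$, and for odd $n$ one checks that the $(n+1)$-st difference is preserved in sign. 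Concretely, for odd $n$ the function $x\mapsto (-x)_+^n$ is again $n$-Jensen-convex (its $(n-1)$-st derivative is $(-1)^{n}n!\,(-x)_+ = -n!\,(-x)_+$... so in fact one should argue directly: $(-x)_+^n=|x|^n$ on $x\le 0$, and the relevant reflection identity is cleanest stated as $\varphi(u+kv)=\psi(-u-kv)$ with $\psi(s)=s_-^n$; I will package this so that the backward-difference formula~\eqref{eq:nabla_n} converts a negative-increment forward difference into a positive-increment one). Finally, if $v=0$ every term equals $u_+^n$ and the alternating binomial sum vanishes, so $\Delta_h^{n+1}f(x)=0\ge 0$.

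The main obstacle is the sign bookkeeping in the case $v=a(h)<0$: one must verify carefully that reflecting the argument and re-indexing the alternating sum reproduces a genuine $(n+1)$-st forward difference of $t_+^n$ (and not of $t_-^n$ with the wrong sign), and this is precisely where the hypothesis that $n$ is odd enters — for even $n$ the reflection produces $t\mapsto t_-^n=(-t)_+^n$, whose $(n+1)$-st difference need not be controlled by Lemma~\ref{lem:cx+} in the same way. I would handle this cleanly by noting that for odd $n$ we have the identity $(-t)_+^n = -\,t_-^n = -(-t)^n_+$... more safely: observe $t\mapsto (t)_+^n + (-t)_+^n$ has constant sign behaviour, or simply invoke that for odd $n$, $\varphi(x)=x_+^n$ satisfies $\varphi(x)-\varphi(-x)=$ an odd polynomial of degree $n$, whose $(n+1)$-st difference vanishes, reducing the $v<0$ case to the $v>0$ case already settled. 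Everything else — the expansion, the $\Q$-linearity of $a$, and the appeal to Lemma~\ref{lem:cx+} — is routine.
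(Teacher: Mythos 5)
Your proposal is correct and follows essentially the same route as the paper: both expand $\Delta_h^{n+1}f(x)$ via additivity into $\Delta_{a(h)}^{n+1}\varphi\bigl(a(x)\bigr)$ with $\varphi(t)=t_+^n$, and then use that for odd $n$ this equal\--increment difference is nonnegative for an increment $a(h)$ of \emph{arbitrary} sign --- the paper cites Kuczma's remark to this effect, while you verify it directly by cases, your cleanest version (for odd $n$, $\varphi(t)-\varphi(-t)=t^n$ is a polynomial of degree $n$, so its $(n+1)$\--st differences vanish, reducing $v<0$ to $v>0$) being valid. One small slip in the exploratory part: the re\--indexing factor is $(-1)^{n+1}$, not $(-1)^{n(n+1)}$ --- this is precisely where oddness of $n$ enters --- but it does not affect your final argument.
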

\begin{proof}
 Let $\varphi(x)=x_+^n$. Then $f(x)=\varphi\bigl(a(x)\bigr)$. Using the well--known formula (\cf\ \cite[Corollary~15.1.2]{Kuc09}, see also~\eqref{eq:delta_n}) we obtain
 \begin{align*}
  \Delta_h^{n+1}f(x)&=\sum_{i=0}^{n+1}\binom{n+1}{i}(-1)^i f\bigl(x+(n+1-i)h\bigr)\\
  &=\sum_{i=0}^{n+1}\binom{n+1}{i}(-1)^i\varphi\bigl(a(x)+(n+1-i)a(h)\bigr)
  =\Delta_{a(h)}^{n+1}\,\varphi\bigl(a(x)\bigr)\ge 0\,,
 \end{align*}
 because $\varphi$ is an~$n$--convex function (for instance, by Lemma~\ref{lem:cx+}) and~$n$ is an odd number (if~$\varphi$ is~$n$--convex and~$n$~is odd, then $\Delta_k^{n+1}\varphi(y)\ge 0$ for any $y\in\R$ and any increment $k\in\R$, \cf\ \cite[p.~429]{Kuc09}, a~comment before Lemma~15.3.1). By virtue of~\eqref{eq:n_J_conv} the proof is finished.\hfill\null\qed
\end{proof}

In the rest of this paper we use the following idea. The additive map $a:\R\to\R$ is the linear functional over the vector space of real numbers over the field of rational numbers. Then the function~$a$ is uniquely determined by its values on the Hamel basis (\cf~\eg~\cite{Kuc09}).

Now we are in a~position to state our main result.
\begin{thm}\label{th:main}
Let $n$ be an odd natural number and let~$H\subset\R$ be the Hamel basis \st\ $h_1,\dots,h_{n+1}\in H$ are distinct and positive. Let $a:\R\to\R$ be the additive function \st
\[
 a(h_1)=-1,\quad a(h_2)=\dots=a(h_{n+1})=1.
\]
The function $f:\R\to\R$ given by
\begin{equation*}
  f(x)=\bigl(a(x)\bigr)_+^n
\end{equation*}
is $n$--Jensen--convex and it is not $n$--Wright--convex.
\end{thm}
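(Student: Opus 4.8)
The first claim needs nothing new: $n$ being odd and $a$ additive, $f(x)=\bigl(a(x)\bigr)_+^n$ is $n$--Jensen--convex by Corollary~\ref{cor:axn+}. So the whole task is to exhibit one violation of~\eqref{eq:n_W_conv}, and the natural candidate is to keep the increments $h_1,\dots,h_{n+1}$ from the statement (they are positive, hence admissible) and to look for a suitable $x$. The plan is to compute $\Delta_{h_1\dots\,h_{n+1}}f(x)$ in closed form. Put $t:=a(x)$. Expanding by~\eqref{eq:delta_n}, additivity of $a$ turns each of the $2^{n+1}$ summands $f\bigl(x+\sum_{j\in S}h_j\bigr)$, $S\subseteq\{1,\dots,n+1\}$, into $\bigl(t+\sum_{j\in S}a(h_j)\bigr)_+^n$, and $\sum_{j\in S}a(h_j)$ equals $|S|$ when $1\notin S$ and $|S|-2$ when $1\in S$. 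Grouping the summands by the size $k$ of $S$ and by whether $1\in S$ (there being $\binom nk$ sets of size $k$ that avoid the index $1$ and $\binom{n}{k-1}$ that contain it), then re--indexing, collapses the $(n+1)$st difference to a function of the single real variable $t$.

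The reduction should come out as
\[
 \Delta_{h_1\dots\,h_{n+1}}f(x)=g(t-1)-g(t),\qquad g(t):=\Delta_1^{n}\varphi(t),\quad\varphi(s)=s_+^n,
\]
where $\Delta_1^n$ denotes the $n$th iterated forward difference with all increments equal to $1$. I would then use the pointwise identity $s_+^n=\tfrac12\bigl(s^n+|s|^n\bigr)$, which holds exactly because $n$ is odd, together with the fact that the $n$th difference of the monomial $s\mapsto s^n$ is the constant $n!$, to rewrite $g(t)=\tfrac{n!}{2}+\tfrac12\psi(t)$, where $\psi$ is the $n$th difference of $s\mapsto|s|^n$, namely
\[
 \psi(t)=\sum_{i=0}^{n}\binom{n}{i}(-1)^{n-i}|t+i|^n .
\]
Hence $\Delta_{h_1\dots\,h_{n+1}}f(x)=\tfrac12\bigl(\psi(t-1)-\psi(t)\bigr)$, so it suffices to find one $t$ with $\psi(t)>\psi(t-1)$.

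At this point the integer values of $\psi$ decide the matter. On one hand $\psi(0)=\sum_{i=0}^{n}\binom ni(-1)^{n-i}i^n=n!$ by the classical finite--difference identity; on the other hand the symmetry $\psi(-n-t)=-\psi(t)$ (a consequence of $|s|^n$ being even and $n$ odd) gives $\psi(-n)=-\psi(0)=-n!$. Telescoping,
\[
 2\,n!=\psi(0)-\psi(-n)=\sum_{j=1}^{n}\bigl(\psi(-n+j)-\psi(-n+j-1)\bigr),
\]
so some integer $t_0\in\{-n+1,\dots,0\}$ satisfies $\psi(t_0)-\psi(t_0-1)>0$. Taking $x_0:=t_0h_2$, so that $a(x_0)=t_0$ (recall $a(h_2)=1$), gives $\Delta_{h_1\dots\,h_{n+1}}f(x_0)=\tfrac12\bigl(\psi(t_0-1)-\psi(t_0)\bigr)<0$ with all increments positive, which contradicts~\eqref{eq:n_W_conv}. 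Hence $f$ is not $n$--Wright--convex.

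The main difficulty is not any isolated estimate but finding the right normal form: organising the expansion into $2^{n+1}$ terms so that it becomes the one--variable difference $\psi(t-1)-\psi(t)$. Oddness of $n$ enters twice — once in Corollary~\ref{cor:axn+} and once in $s_+^n=\tfrac12(s^n+|s|^n)$ — while the sign pattern $(-1,1,\dots,1)$ is exactly what produces the unit shift between $g(t-1)$ and $g(t)$; the bookkeeping of signs and of the two binomial families, and the evaluation $\psi(-n)=-n!$, are the spots that need care. I would also expect this computation to have a measure--theoretic face: $\Delta_{h_1\dots\,h_{n+1}}f\ge0$ for all positive increments is a domination between two convex combinations of Dirac masses, which is presumably the viewpoint the authors develop and under which the role of the data ``$n$ odd'' and $(-1,1,\dots,1)$ becomes most transparent.
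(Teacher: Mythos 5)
Your argument is correct, and it takes a genuinely different route from the paper. You reduce the mixed difference to a one--variable finite--difference computation: grouping the $2^{n+1}$ summands of~\eqref{eq:delta_n} according to whether the index $1$ belongs to the chosen subset does give
\[
 \Delta_{h_1\dots\,h_{n+1}}f(x)=g\bigl(a(x)-1\bigr)-g\bigl(a(x)\bigr),\qquad g=\Delta_1^n\varphi,\ \varphi(s)=s_+^n,
\]
(I checked the sign bookkeeping; for $n=3$ and $a(x)=0$ it reproduces the paper's value $g(-1)-g(0)=5-6=-1$), and then the odd--$n$ identity $s_+^n=\tfrac12(s^n+|s|^n)$, the classical value $\psi(0)=n!$, the antisymmetry $\psi(-n-t)=-\psi(t)$ and the telescoping of $\psi(0)-\psi(-n)=2n!$ legitimately produce an integer $t_0$ with $\Delta_{h_1\dots\,h_{n+1}}f(t_0h_2)<0$, which is all that the negation of~\eqref{eq:n_W_conv} requires (note that your witness point is $x_0=t_0h_2$, not necessarily $0$, and you do not compute the value of the difference). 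The paper instead proves the sharper exact statement $\Delta_{h_1\dots\,h_{n+1}}f(0)=-1$: it encodes the values of $f$ on the finite set $A$ of subset sums of $h_1,\dots,h_{n+1}$ by the signed measure $\mu=\mu_2+\dots+\mu_{n+1}-\mu_1$ built from Dirac measures via the summation operator $\J_{h_1\dots\,h_{n+1}}$, shows $f=(\mu+\delta_{h_1})^n$ on $A$, and evaluates $\nabla_{h_1\dots\,h_{n+1}}$ of $\mu^n$ and of $\delta_{h_1}$ separately using the inversion formula $\nabla_{h}\J_{h}\nu=\nu$. What your approach buys is brevity and elementarity: no measure theory, no exact evaluation, just a pigeonhole on a telescoping sum, with the oddness of $n$ entering through $s_+^n=\tfrac12(s^n+|s|^n)$ exactly as you say. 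What the paper's approach buys is the precise value $-1$ at the explicit point $x=0$ and a calculus of the operators $\J_h$, $\nabla_h$ on measures (Propositions~\ref{prop2} and~\ref{prop3}) that the authors develop as a tool of independent interest; your closing guess about a measure--theoretic reading of the inequality is indeed the viewpoint they adopt.
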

\begin{proof}
 By Corollary~\ref{cor:axn+} the function~$f$ is $n$--Jensen convex. To prove that $f$ is not $n$--Wright--convex, it is enough to show that $\Delta_{h_1\dots\,h_{n+1}}\,f(0)=-1$ (see~\eqref{eq:n_W_conv}). However, this job is not trivial. It requires to develop new tools, and, on the other hand, it is rather long. For these reasons we postpone the rest of the proof to the last section.\hfill\null\qed
\end{proof}

Because every $n$--Wright convex function is $n$--Jensen convex, by the above Theorem we obtain immediately
\begin{cor}
 For any odd $n\in\N$ the class of $n$--Wright--convex functions is properly contained in the class of $n$--Jensen--convex functions.
\end{cor}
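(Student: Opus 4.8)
The plan is to treat the two assertions separately. That $f$ is $n$--Jensen--convex is precisely Corollary~\ref{cor:axn+} (and this is the only place where the oddness of~$n$ is genuinely used). For the failure of $n$--Wright--convexity, note that the defining inequality~\eqref{eq:n_W_conv} is demanded for \emph{all} positive increments and, since $\I=\R$, with no restriction on the base point; hence it is enough to exhibit one admissible configuration at which the mixed difference of~$f$ is negative. The natural choice is the basis elements $h_1,\dots,h_{n+1}$ themselves, which are positive by hypothesis, evaluated at $x=0$, so that everything reduces to proving
\[
 \Delta_{h_1\dots\,h_{n+1}}\,f(0)=-1\,,
\]
as was already announced after the statement of the theorem.

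To prove this identity I would first transport the computation from~$f$ to the one--variable model $\varphi(t):=t_+^n$, in the spirit of the proof of Corollary~\ref{cor:axn+}. Since $a$ is additive, $a(0)=0$ and $f\bigl(\sum_{j\in S}h_j\bigr)=\varphi\bigl(\sum_{j\in S}a(h_j)\bigr)$ for every $S\subseteq\{1,\dots,n+1\}$; substituting this into the inclusion--exclusion expansion~\eqref{eq:delta_n} of $\Delta_{h_1\dots\,h_{n+1}}\,f(0)$ and comparing with the analogous expansion of $\Delta_{a(h_1)\dots\,a(h_{n+1})}\,\varphi(0)$ shows that these two quantities are equal. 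Because $a(h_1)=-1$ while $a(h_2)=\dots=a(h_{n+1})=1$, and because the mixed difference is symmetric in its increments (by the symmetry of~\eqref{eq:delta_n} in the $h_j$), this equals $\Delta_{1,\dots,1,\,-1}\,\varphi(0)$, with $n$ unit increments and one increment $-1$. The new feature compared with Corollary~\ref{cor:axn+} is that one transported increment is negative, which is exactly why~$f$ need not inherit $n$--Wright--convexity from~$\varphi$.

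It remains to compute $\Delta_{1,\dots,1,\,-1}\,\varphi(0)$, which I would do by peeling off the increment~$-1$. With $g(t):=\Delta_{-1}\,\varphi(t)=\varphi(t-1)-\varphi(t)=(t-1)_+^n-t_+^n$, iteration of differences gives $\Delta_{1,\dots,1,\,-1}\,\varphi(0)=\Delta_1^n\,g(0)=\Delta_1^n\bigl[(t-1)_+^n\bigr](0)-\Delta_1^n\bigl[t_+^n\bigr](0)$. The nodes occurring here are $t=0,1,\dots,n$. On all of them $t\mapsto t_+^n$ coincides with the monic polynomial $t^n$, so $\Delta_1^n\bigl[t_+^n\bigr](0)=\Delta_1^n\bigl[t^n\bigr](0)=n!$; on the nodes $t=1,\dots,n$ the function $t\mapsto(t-1)_+^n$ coincides with the monic polynomial $(t-1)^n$, and it differs from that polynomial only at the node $t=0$, where it takes the value~$0$ instead of $(-1)^n$. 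Since $t=0$ carries the coefficient $\binom{n}{0}(-1)^{n}$ in the expansion of $\Delta_1^n$, it follows that $\Delta_1^n\bigl[(t-1)_+^n\bigr](0)=n!+(-1)^{n}\bigl(0-(-1)^{n}\bigr)=n!-1$. Subtracting, $\Delta_1^n\,g(0)=(n!-1)-n!=-1$, which is the required identity and finishes the proof.

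A word on where the difficulty sits. The short computation above hides the bookkeeping of which of the $2^{n+1}$ evaluation points in~\eqref{eq:delta_n} make $t_+^n$ depart from being a polynomial; the authors evidently prefer to systematize this through signed combinations of Dirac measures, so that $\Delta_{h_1\dots\,h_{n+1}}$ is recognized as integration against a single finitely supported signed measure and the whole matter turns into a comparison of two such measures. I expect the real work in their approach to lie in building and justifying that measure--theoretic dictionary in a form reusable for later problems, rather than in the single evaluation sketched here; and the fact that the very first step, Corollary~\ref{cor:axn+}, already fails for even~$n$ explains why one should not hope for an equally brief argument covering all~$n$ at once.
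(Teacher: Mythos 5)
Your argument is correct, and for the essential point --- the evaluation $\Delta_{h_1\dots\,h_{n+1}}\,f(0)=-1$ --- it takes a genuinely different and more elementary route than the paper. You transport the mixed difference through the additive function: by \eqref{eq:delta_n} and additivity, $\Delta_{h_1\dots\,h_{n+1}}\,f(0)=\Delta_{a(h_1)\dots\,a(h_{n+1})}\,\varphi(0)$ with $\varphi(t)=t_+^n$, and since the expansion \eqref{eq:delta_n} is symmetric in the increments this is $\Delta_1^n\bigl(\Delta_{-1}\varphi\bigr)(0)$; the two $n$-th differences are then evaluated by noting that on the integer nodes $0,1,\dots,n$ the functions $t_+^n$ and $(t-1)_+^n$ agree with monic polynomials of degree~$n$ except for the single node $t=0$ in the second case, giving $(n!-1)-n!=-1$ (a minor slip: the coefficient of the node $t=0$ is $\binom{n}{n}(-1)^n$ rather than $\binom{n}{0}(-1)^n$, but both equal $(-1)^n$, so nothing changes). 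The paper instead proves this identity by building a measure--theoretic apparatus: the operators $\tau_h$, $\nabla_h$ and $\J_h$ acting on measures (Propositions~\ref{prop2} and~\ref{prop3}), the signed measure $\mu$ of \eqref{eq:10} assembled from Dirac measures via the Hamel--basis incommensurability, the identification $f=(\mu+\delta_{h_1})^n$ on the set $A$ (Theorem~\ref{th5}), and the three formulas of Lemma~\ref{lem_useful}. Your computation is shorter, needs the Hamel basis only to guarantee the existence of the additive function $a$ with the prescribed values (not for any uniqueness-of-representation argument), and makes transparent that the value $-1$ is independent of the parity of $n$, so that oddness enters only through Corollary~\ref{cor:axn+}; what it does not deliver is the reusable measure--theoretic machinery that the authors explicitly intend for further applications. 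For completeness, remember that the corollary also uses the trivial inclusion that every $n$-Wright-convex function is $n$-Jensen-convex (set $h_1=\dots=h_{n+1}=h$ in \eqref{eq:n_W_conv}), as noted in the Introduction; together with your counterexample this gives the proper containment.
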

If $n\in\N$ is even, the question whether the above inclusion is proper, remains an open problem.

\section{Two particular cases}

\subsection{The case $n=3$}

As we mentioned in the Introduction, in the general case the proof of Theorem~\ref{th:main} is difficult. In this subsection we deliver some simpler proof for the case $n=3$.
\par\medskip
Take the Hamel basis $H$ \st\ $h_1,h_2,h_3,h_4\in H$ are distinct and positive. Let $a:\R\to\R$ be the additive function \st\ $a(h_1)=-1$, $a(h_2)=a(h_3)=a(h_4)=1$. Let $f(x)=\bigl(a(x)\bigr)_+^3$.
Due to Corollary~\ref{cor:axn+} the function $f:\R\to\R$ is 3--Jensen--convex. We will show that~$f$ is not $3$--Wright--convex. To this end we will check that $\Delta_{h_1h_2h_3h_4}\,f(0)=-1<0$, so the inequality~\eqref{eq:n_W_conv} does not hold for $n=3$. We have
\[
 f(0+h_1+h_2+h_3+h_4)=\bigl(a(0)+a(h_1)+a(h_2)+a(h_3)+a(h_4)\bigr)_+^3=8\,.
\]
Similarly
\begin{align*}
 f(0+h_1+h_2+h_3)=f(0+h_1+h_2+h_4)=f(0+h_1+h_3+h_4)&=1\,,\\
 f(0+h_2+h_3+h_4)&=27\,,\\[1ex]
 f(0+h_1+h_2)=f(0+h_1+h_3)=f(0+h_1+h_4)&=0\,,\\
 f(0+h_2+h_3)=f(0+h_2+h_4)=f(0+h_3+h_4)&=8\,,\\[1ex]
 f(0+h_1)&=0\,,\\
 f(0+h_2)=f(0+h_3)=f(0+h_4)&=1\,,\\[2ex]
 f(0)&=0\,.
\end{align*}
Then, having in mind the formula~\eqref{eq:delta_n}, we arrive at
\begin{align*}
 \Delta_{h_1h_2h_3h_4}^4f(x)&=f(0+h_1+h_2+h_3+h_4)\\[1ex]
 &-\bigl[f(0+h_1+h_2+h_3)+f(0+h_1+h_2+h_4)\\
 &\phantom{-\bigl[}+f(0+h_1+h_3+h_4)+f(0+h_2+h_3+h_4)\bigr]\\[1ex]
 &+\bigl[f(0+h_1+h_2)+f(0+h_1+h_3)+f(0+h_1+h_4)\\
 &\phantom{+\bigl[}+f(0+h_2+h_3)+f(0+h_2+h_4)+f(0+h_3+h_4)\bigr]\\[1ex]
 &-\bigl[f(0+h_1)+f(0+h_2)+f(0+h_3)+f(0+h_4)\bigr]\\[1ex]
 &+f(0)=8-30+24-3+0=-1\,.
\end{align*}
In a similar way this proof was also repeated for $n\in\{5,7,9,11\}$, however, the computations were done by the computer.

\subsection{The case $n=2$}

Now we discuss the case $n=2$ to convince the reader that for even values of~$n$ our problem is not easy to solve. Precisely, we will try to compare the classes of $2$--Jensen--convex functions with the class of $2$--Wright--convex ones.
\par\medskip
Looking at the example given in the Introduction we could suppose that the function $f:\R\to\R$ given by $f(x)=|Q(x)|$, where $Q:\R\to\R$ fulfils the quadratic functional equation
\begin{equation}\label{eq:quadratic}
 Q(x+y)+Q(x-y)=2Q(x)+2Q(y),
\end{equation}
could be a~good example of a~$2$--Jensen--convex function which is not $2$--Wright--convex. Unfortunately,~$f$ need not to be $2$--Jensen--convex. To see this take the Hamel basis~$H$ containing the vectors $1$, $\sqrt{2}$ and $\sqrt[4]{2}$. Next take the additive function $a:\R\to\R$ defined on~$H$ by $a(1)=-9$, $a(\sqrt{2})=4$ and $a(h)=0$ for $h\in H\setminus\bigl\{1,\sqrt{2}\bigr\}$. Then the function $Q(x)=a(x^2)$ fulfils~\eqref{eq:quadratic}. Finally, for $x=1$, $h=\sqrt[4]{2}-1>0$ we have $Q(x)=-9$, $Q(x+h)=4$, $Q(x+2h)=7$, $Q(x+3h)=0$, whence
\[
 \Delta_h^3f(x)=\Delta_h^3|Q(x)|=|Q(x+3h)|-3|Q(x+2h)|+3|Q(x+h)|-|Q(x)|=-18<0,
\]
which, according to~\eqref{eq:n_J_conv}, proves our claim.
\par\medskip
Having in mind Theorem~\ref{th:main}, it is reasonable to expect that the function $f(x)=\bigl(a(x)\bigr)_+^2$ (for some properly chosen additive function~$a:\R\to\R$) could be the nice example of a~$2$--Jensen--convex function which is not $2$--Wright--convex. However, such a~function is not $2$--Jensen--convex for any discontinuous additive function~$a$ and for any additive function of the form $a(x)=cx$ with $c<0$. If~$a(x)=cx$ with some $c\ge 0$, then~$f$ is continuous and, as we will show, $f$ is $2$--Jensen--convex. Hence, by continuity, $f$ is also $2$--Wright--convex (\cf~\cite[Theorem~15.7.1]{Kuc09}), so it is not a~good candidate for our counterexample.

\begin{prop}
 If $a:\R\to\R$ is a~discontinuous additive function, then $f(x)=\bigl(a(x)\bigr)_+^2$ is not $2$--Jensen--convex.
\end{prop}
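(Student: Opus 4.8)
The plan is to exhibit a single $x_0\in\R$ together with a single increment $h_0>0$ for which $\Delta_{h_0}^3 f(x_0)<0$; in view of~\eqref{eq:n_J_conv} (taken with $n=2$) this is precisely the negation of ``$f$ is $2$--Jensen--convex''. The only feature of $a$ I would invoke is that, being discontinuous, $a$ cannot be nonnegative on the whole half--line $(0,\infty)$: an additive function that is nonnegative on $(0,\infty)$ is nondecreasing, hence continuous (\cf~\cite{Kuc09}). Thus there exists $h_0>0$ with $a(h_0)<0$; put $c:=-a(h_0)>0$.

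Then I would take $x_0:=-\frac12 h_0$. Since $a$ is additive (in particular $\mathbb{Q}$--homogeneous), one gets $a(x_0)=\frac{c}{2}$ and $a(x_0+kh_0)=a(x_0)+k\,a(h_0)=\frac{c}{2}-kc$ for $k=0,1,2,3$, \ie\ the numbers $\frac{c}{2},-\frac{c}{2},-\frac{3c}{2},-\frac{5c}{2}$. Because $t_+^2=0$ for $t\le 0$, this yields $f(x_0)=\frac{c^2}{4}$ and $f(x_0+h_0)=f(x_0+2h_0)=f(x_0+3h_0)=0$, whence
\[
 \Delta_{h_0}^3 f(x_0)=f(x_0+3h_0)-3f(x_0+2h_0)+3f(x_0+h_0)-f(x_0)=-\frac{c^2}{4}<0,
\]
which is the desired contradiction with~\eqref{eq:n_J_conv}.

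I do not anticipate a genuine obstacle here. The discontinuity of $a$ enters only through the elementary fact that $a$ takes a negative value at some positive point, and the one small idea is to locate $x_0$ so that exactly three of the four arguments $a(x_0+kh_0)$ are negative, which annihilates three of the four terms of $\Delta_{h_0}^3 f(x_0)$ and leaves a single strictly negative summand. Any $x_0$ with $0<a(x_0)<-a(h_0)$ serves the same purpose; the choice $x_0=-\frac12 h_0$ is simply the most convenient one, and it keeps all arguments inside the domain~$\R$.
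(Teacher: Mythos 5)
Your proof is correct. It has the same overall shape as the paper's argument: in both cases one produces a single pair $(x,h)$ with $h>0$ such that $a(x)>0$ while $a(x+h)$, $a(x+2h)$, $a(x+3h)$ are all negative, so that three of the four terms of $\Delta_h^3 f(x)$ vanish and the remaining one is $-\bigl(a(x)\bigr)^2<0$. The difference lies in how the witness pair is obtained. The paper invokes the density of the graph of a discontinuous additive function in the plane and picks $\bigl(x,a(x)\bigr)$ near $(0,1)$ and $\bigl(h,a(h)\bigr)$ near $(1,-2)$, which forces the required sign pattern only approximately but very flexibly. You instead use the fact that a discontinuous additive function cannot be nonnegative on $(0,\infty)$ (otherwise it would be nondecreasing, hence continuous, \cf~\cite{Kuc09}) to get $h_0>0$ with $a(h_0)=-c<0$, and then exploit $\mathbb{Q}$--homogeneity to take $x_0=-\tfrac12 h_0$, so that $a(x_0+kh_0)=\tfrac{c}{2}-kc$ gives the sign pattern exactly and even yields the explicit value $\Delta_{h_0}^3 f(x_0)=-\tfrac{c^2}{4}$. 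Both auxiliary facts are standard results on additive functions, so neither route is deeper than the other; yours is slightly more economical (one negative value of $a$ plus rational homogeneity suffices, no density theorem needed), while the paper's density argument generalizes immediately to any prescribed target values, which is convenient if one wants to adapt the construction to other exponents or sign configurations.
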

\begin{proof}
 Since $a$ is~a discontinuous additive function, its graph is dense on the whole plane (\cf~\eg~\cite{Kuc09}). Then close to the point $(0,1)$ there exists a~point $\bigl(x,a(x)\bigr)$. We can claim, for example, that
 \begin{equation}\label{prop:2_conv_pf1}
  0.9<a(x)<1.1\,.
 \end{equation}
 Similarly, close to the point $(1,-2)$ there exists the point $\bigl(h,a(h)\bigr)$. We can claim that $h>0$ and
 \[
  -2.1<a(h)<-1.9\,.
  \]
 Therefore
 \begin{align*}
  -5.4<a(x+3h)=a(x)+3a(h)<-4.6&\implies \bigl(a(x+3h)\bigr)_+=0,\\
  -3.3<a(x+2h)=a(x)+2a(h)<-2.7&\implies \bigl(a(x+2h)\bigr)_+=0,\\
  -1.2<a(x+h)=(x)+a(h)<-0.8&\implies \bigl(a(x+h)\bigr)_+=0.
 \end{align*}
 By \eqref{prop:2_conv_pf1} we get $\bigl(a(x)\bigr)_+=a(x)>0$. Hence, by $f(x)=\bigl(a(x)\bigr)_+^2$,
 \[
  \Delta_h^3 f(x)=f(x+3h)-3f(x+2h)+3f(x+h)-f(x)=-\bigl(a(x)\bigr)^2<0,
 \]
 so the inequality~\eqref{eq:n_J_conv} does not hold for $n=2$ and for any $x\in\R$, $h>0$.\hfill\null\qed
\end{proof}

\begin{prop}
 If $a(x)=cx$ for some $c\ge 0$, then $f(x)=\bigl(a(x)\bigr)_+^2$ is $2$--Jensen--convex.
\end{prop}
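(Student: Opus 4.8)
The plan is to reduce the statement to Lemma~\ref{lem:cx+}. Since $c\ge 0$, for every $x\in\R$ one has $(cx)_+=\max\{cx,0\}=c\max\{x,0\}=cx_+$, and therefore
\[
 f(x)=\bigl(a(x)\bigr)_+^2=(cx)_+^2=c^2\,x_+^2\,.
\]
Thus $f$ is exactly the function $\varphi(x)=c'x_+^n$ of Lemma~\ref{lem:cx+} with $n=2$ and $c'=c^2\ge 0$, so it is $2$--Jensen--convex and we are done. In this sense there is essentially no obstacle: the whole point is that positivity of~$c$ lets one pull the constant out of the positive part, which is precisely what fails for a~discontinuous additive~$a$ (\cf~the preceding Proposition) or for $a(x)=cx$ with $c<0$.

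If one wishes to avoid invoking the theory of $n$--convex functions, I would instead check the defining inequality~\eqref{eq:n_J_conv} for $n=2$ directly, \ie\ that $\Delta_h^3f(x)\ge 0$ for all $x\in\R$ and $h>0$. Writing $f(x)=c^2x_+^2$ and recalling that the third forward difference of the quadratic polynomial $t\mapsto c^2t^2$ vanishes identically, the value $\Delta_h^3f(x)$ depends only on which of the four increasing points $x,x+h,x+2h,x+3h$ are nonpositive (on those, $f$ vanishes). Hence exactly one of five mutually exclusive cases occurs: either all four points are $\le 0$, or all four are $\ge 0$ --- in both cases $\Delta_h^3f(x)=0$ --- or $x+jh\le 0<x+(j+1)h$ for a~unique $j\in\{0,1,2\}$. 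In each of the latter three cases one discards the vanishing terms and verifies by an elementary computation that the surviving combination of squares is nonnegative on the relevant range of~$x$: one obtains $c^2x^2$ for $j=0$, a~downward parabola in $x+2h$ bounded below by $c^2h^2$ for $j=1$, and $c^2(x+3h)^2$ for $j=2$. This completes the argument; the case analysis is routine, and the only mildly delicate spot is the sub-case $j=1$, where nonnegativity is not visible term by term but follows after a~short rearrangement.
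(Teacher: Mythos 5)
Your first argument is exactly the paper's proof: since $c\ge 0$ one has $(cx)_+=cx_+$, hence $f(x)=c^2x_+^2$, and Lemma~\ref{lem:cx+} applies with $n=2$ and constant $c^2\ge 0$. The supplementary direct verification of $\Delta_h^3f(x)\ge 0$ by case analysis is correct (including the $j=1$ sub-case, where the minimum is indeed $c^2h^2$) but unnecessary; the paper stops after the reduction to the lemma.
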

\begin{proof}
 Since
 \[
  f(x)=\bigl(a(x)\bigr)_+^2=\biggl(\frac{cx+|cx|}{2}\biggr)^2=c^2\biggl(\frac{x+|x|}{2}\biggr)^2=c^2x_+^2\,,
 \]
 then $f$ is $2$--Jensen--convex by Lemma~\ref{lem:cx+}.\hfill\null\qed
\end{proof}

\begin{prop}
 If $a(x)=cx$ for some $c<0$, then $f(x)=\bigl(a(x)\bigr)_+^2$ is not $2$--Jensen--convex.
\end{prop}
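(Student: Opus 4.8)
The plan is to write $f$ out explicitly and then exhibit a single point and increment at which the third forward difference is negative. Since $c<0$, the inequality $cx\ge 0$ holds precisely when $x\le 0$, so $(cx)_+=cx$ for $x\le 0$ and $(cx)_+=0$ for $x>0$. Consequently
\[
 f(x)=\bigl(a(x)\bigr)_+^2=\begin{cases}c^2x^2,&x\le 0,\\[0.5ex]0,&x>0.\end{cases}
\]
In other words $f$ coincides with $c^2x^2$ on the negative half--line and vanishes on the positive one; the ``kink'' at the origin is exactly what will violate $2$--Jensen--convexity.

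Next I would simply take $x=-2$ and $h=1>0$. Then $x=-2$, $x+h=-1$ and $x+2h=0$ lie in $(-\infty,0]$ while $x+3h=1>0$, so $f(x)=4c^2$, $f(x+h)=c^2$, $f(x+2h)=0$ and $f(x+3h)=0$. Hence
\[
 \Delta_h^3f(x)=f(x+3h)-3f(x+2h)+3f(x+h)-f(x)=0-0+3c^2-4c^2=-c^2<0,
\]
and since $c\ne 0$ this is strictly negative. By~\eqref{eq:n_J_conv} (with $n=2$) the inequality defining $2$--Jensen--convexity fails at this point, which proves that $f$ is not $2$--Jensen--convex.

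There is essentially no obstacle here: once $f$ is written in the piecewise form above, the claim reduces to a one--line arithmetic check. The only (trivial) point to keep in mind is to choose $x<0<x+3h$ so that the four evaluation points straddle the origin; evaluating $\Delta_h^3f$ on four points lying entirely on one side of $0$ would give $0$, because there $f$ agrees with a quadratic polynomial, and would reveal nothing.
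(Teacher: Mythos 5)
Your proof is correct and follows essentially the same route as the paper: write $f$ explicitly (the paper's $f(x)=c^2(-x)_+^2$ is exactly your piecewise formula) and exhibit one point and increment with $\Delta_h^3 f(x)=-c^2<0$; the paper simply uses $x=-1$, $h=1$ instead of your $x=-2$, $h=1$.
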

\begin{proof}
 If $c<0$, then we have
 \[
  f(x)=\bigl(a(x)\bigr)_+^2=\biggl(\frac{cx+|cx|}{2}\biggr)^2=\biggl(\frac{cx-c|x|}{2}\biggr)^2=c^2\biggl(\frac{x-|x|}{2}\biggr)^2.
 \]
 Therefore
 \[
  f(-x)=c^2\biggl(\frac{-x-|x|}{2}\biggr)^2=c^2\biggl(\frac{x+|x|}{2}\biggr)^2=c^2x_+^2
 \]
 and $f(x)=c^2(-x)_+^2$. Setting $x=-1$, $h=1$ we obtain $\Delta_h^3f(x)=-c^2<0$, so $f$ is not $2$--Jensen--convex.\hfill\null\qed
\end{proof}

\section{Proof of Theorem~\ref{th:main}}

In this section we develop new tools connected with the measure theory and we use them to prove that the function~$f$ defined in Theorem~\ref{th:main} is not $n$--Wright--convex. According to our best knowledge this approach was not used so far.

\subsection{Notations and basic facts}

By $\br$ we denote the $\sigma$--field of Borel subsets of $\R$. By Borel measure we mean any measure defined on $\br$. It is known that the distribution function $F_{\mu}(x)=\mu\bigl((-\infty,x)\bigr)$ determines $\mu$ i.e.~to know the value the Borel measure, it is enough to know its values on the intervals $(-\infty,x)$ for any $x\in\R$ (\cf~\cite[Sections~12, 14]{Bil95}.

Throughout this section we deal only with the functions $f:\R\to\R$. In addition to the backward difference operator $\nabla_h$ given by~\eqref{eq:backward} we also consider the \emph{backward translation} operator
\[
 \tau_hf(x)=f(x-h)\,,\quad x,h\in\R\,.
\]
Let $\mr$ be the set of all Borel measures $\nu$ on $\br$ \st\ $\nu\bigl((-\infty,x)\bigr)<\infty$, $x\in\R$.

\begin{rem}\label{rem1}
 If $\nu\in\mr$, then $\lim\limits_{x\to-\infty}\nu\bigl((-\infty,x)\bigr)=0$.
\end{rem}
\begin{proof}
 It is an easy consequence of the general property of the measure: if $(A_k:k\in\N)$ is a~descending sequence of measurable sets with $\nu(A_1)<\infty$, then $\nu\bigl(\bigcap_{k\in\N}A_k\bigr)=\lim\limits_{k\to\infty}\nu(A_k)$.\hfill\null\qed
\end{proof}

We will consider the operators $\tau_h$ and $\nabla_h$ defined not only for the functions, but also for the measures $\nu\in\mr$ (such the approach is frequently used in the Measure Theory):
\[
 \tau_h\nu(B)=\nu(B-h)\,,\qquad \nabla_h\nu(B)=\nu(B)-\tau_h\,\mu(B)=\nu(B)-\nu(B-h)
\]
for $B\in\br$ with $\nu(B)<\infty$.
\par\medskip
Let $\nu\in\mr$. We define
\[
 \J_h\nu(B)=\sum_{n=0}^{\infty}\tau_h^n\nu(B)\,,\quad h>0\,,\;B\in\br\,,
\]
where $\tau_h^0\nu(B)=\nu(B)$, $\tau_h^{n+1}(B)=\tau_h\bigl(\tau_h^n\nu\bigr)(B)$. It is not difficult to check that
\[
 \J_h\nu\in\mr\iff \sum_{n=0}^{\infty}F_{\nu}(x-nh)<\infty\,,\;x\in\R\quad\text{and}\quad
 \lim_{x\to-\infty}\,\sum_{n=0}^{\infty}F_{\nu}(x-nh)=0\,.
\]
For these notations see also~\cite{Raj12b,Raj12a}.

\begin{prop}\label{prop2}
 Let $\mu\in\mr$ and $h>0$.
 \begin{enumerate}[\upshape 1.]
  \item\label{prop21}
   If
   \begin{equation}\label{prop2:2}
    \nabla_h\,\mu\ge 0\,,
   \end{equation}
   then there exists $\nu\in\mr$ \st\ $\mu$ has the form
   \begin{equation}\label{prop2:3}
    \mu=\J_h\nu\,.
   \end{equation}
   Moreover,
   \begin{equation}\label{prop2:4}
    \nu=\nabla_h\,\mu\,.
   \end{equation}
  \item\label{prop22}
   If $\mu$ has the form~\eqref{prop2:3} with $\nu\in\mr$, then the conditions~\eqref{prop2:2} and~\eqref{prop2:4} hold.
 \end{enumerate}
\end{prop}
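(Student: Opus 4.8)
The statement is an equivalence-type description of measures $\mu\in\mr$ satisfying $\nabla_h\mu\ge 0$: they are exactly the measures of the form $\J_h\nu$ with $\nu\in\mr$, and then necessarily $\nu=\nabla_h\mu$. I would prove the two parts separately, working throughout at the level of distribution functions $F_\mu(x)=\mu\bigl((-\infty,x)\bigr)$, since by the remark quoted from \cite{Bil95} a Borel measure in $\mr$ is determined by its distribution function, and the operators $\tau_h$, $\nabla_h$, $\J_h$ act on distribution functions in a transparent way: $F_{\tau_h\mu}(x)=F_\mu(x-h)$, $F_{\nabla_h\mu}(x)=F_\mu(x)-F_\mu(x-h)$, and $F_{\J_h\nu}(x)=\sum_{n=0}^\infty F_\nu(x-nh)$.

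For part~\itemref{prop22} (the easy direction), assume $\mu=\J_h\nu$ with $\nu\in\mr$; in particular the series $\sum_{n\ge 0}F_\nu(x-nh)$ converges for every $x$ and tends to $0$ as $x\to-\infty$, which is exactly the criterion quoted in the excerpt for $\J_h\nu\in\mr$. Then I compute $\nabla_h\mu$ via distribution functions: $F_{\nabla_h\mu}(x)=F_\mu(x)-F_\mu(x-h)=\sum_{n=0}^\infty F_\nu(x-nh)-\sum_{n=0}^\infty F_\nu(x-h-nh)=\sum_{n=0}^\infty F_\nu(x-nh)-\sum_{m=1}^\infty F_\nu(x-mh)=F_\nu(x)$, the telescoping being legitimate because both series converge absolutely ($F_\nu\ge 0$). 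Hence $F_{\nabla_h\mu}=F_\nu\ge 0$, which gives both $\nabla_h\mu\ge 0$ (as a signed measure, it has a nonnegative, nondecreasing distribution function, hence is a genuine measure; finiteness on $(-\infty,x)$ is inherited from $\nu\in\mr$) and $\nu=\nabla_h\mu$.

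For part~\itemref{prop21} (the substantive direction), assume $\nabla_h\mu\ge 0$ and set $\nu:=\nabla_h\mu$, so $F_\nu(x)=F_\mu(x)-F_\mu(x-h)$. First I would check $\nu\in\mr$: nonnegativity and monotonicity of $F_\nu$ follow from $\nabla_h\mu\ge 0$ applied to intervals, and $F_\nu(x)\le F_\mu(x)<\infty$ gives finiteness. Next I would show $\mu=\J_h\nu$, again through distribution functions. Iterating the defining relation $F_\mu(x)=F_\nu(x)+F_\mu(x-h)$ gives, for every $N$, the partial-sum identity $F_\mu(x)=\sum_{n=0}^{N-1}F_\nu(x-nh)+F_\mu(x-Nh)$. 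Since $F_\mu\ge 0$, the partial sums $\sum_{n=0}^{N-1}F_\nu(x-nh)$ are bounded above by $F_\mu(x)$, hence the nonnegative series $\sum_{n\ge 0}F_\nu(x-nh)$ converges; letting $N\to\infty$ and using $F_\mu(x-Nh)\to 0$ (this is Remark~\ref{rem1}, since $\mu\in\mr$) yields $F_\mu(x)=\sum_{n=0}^\infty F_\nu(x-nh)=F_{\J_h\nu}(x)$ for all $x$. In particular $\J_h\nu$ has finite distribution function and $\lim_{x\to-\infty}F_{\J_h\nu}(x)=\lim_{x\to-\infty}F_\mu(x)=0$, so the criterion for $\J_h\nu\in\mr$ holds and $\mu=\J_h\nu$ as measures. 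Formula~\eqref{prop2:4} is then immediate from the definition $\nu=\nabla_h\mu$ we started with (or, if one prefers, re-derived by the telescoping computation of part~\itemref{prop22}).

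**Main obstacle.** Nothing here is deep; the only point that needs care is the passage from distribution functions back to measures and the justification of the telescoping/limit exchange. The key leverage is that all the relevant distribution functions are nonnegative, so every series in sight has nonnegative terms and convergence plus the elementary bound $F_\nu\le F_\mu$ are all one needs, while $F_\mu(x-Nh)\to 0$ is precisely Remark~\ref{rem1}. I would make sure to state once, at the start, that a measure in $\mr$ is recovered from its distribution function, so that equalities of distribution functions may be silently upgraded to equalities of measures in the rest of the argument.
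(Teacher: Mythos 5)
Your proof is correct and follows essentially the same route as the paper: iterate the relation $\mu=\nabla_h\,\mu+\tau_h\,\mu$, kill the tail $F_\mu(x-Nh)\to 0$ via Remark~\ref{rem1}, and identify $\mu$ with $\J_h\nu$ through equality of distribution functions, with the converse obtained by telescoping. The only cosmetic difference is that you run part~2 entirely through distribution functions (and therefore need the uniqueness upgrade there as well), whereas the paper computes $\tau_h(\J_h\nu)(B)$ directly on Borel sets and reads off $\nu=\mu-\tau_h\,\mu$ without that step.
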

\begin{proof}$ $

 \begin{enumerate}[\upshape 1.]
  \item
   Let $\mu$ fulfils~\eqref{prop2:2}. Using the definition of $\nabla_h$ we have $\nabla_h\,\mu=\mu-\tau_h\,\mu$, whence $\mu=\nabla_h\,\mu+\tau_h\,\mu$. Then
   \begin{align*}
    \tau_h\,\mu&=\tau_h\nabla_h\,\mu+\tau_h^2\,\mu\,,\\
    \tau_h^2\,\mu&=\tau_h^2\,\nabla_h\,\mu+\tau_h^3\,\mu\,,\\
    &\;\;\vdots\\
    \tau_h^n\,\mu&=\tau_h^n\,\nabla_h\,\mu+\tau_h^{n+1}\mu\,.
   \end{align*}
   Hence
   \begin{equation*}
    \mu=\nu+\tau_h\nu+\dots+\tau_h^n\nu+\tau_h^{n+1}\mu\,,
   \end{equation*}
   where $\nu=\nabla_h\,\mu$, $n=1,2,\dots$\,. Taking into account~Remark~\ref{rem1} we infer that
   \[
    \tau_h^{n+1}\mu\bigl((-\infty,x)\bigr)=\nu\Bigl(\bigl(-\infty,x-(n+1)h\bigr)\Bigr)\xrightarrow[n\to\infty]{}0\,.
   \]
   whence the distribution function of the measure~$\mu$, \ie\ $F_{\mu}(x)=\mu\bigl((-\infty,x)\bigr)$ ($x\in\R$), is equal to the distribution function of a~measure~$\J_h\nu$, where $\nu$ is given by~\eqref{prop2:4}. Then these measures are equal (\cf\ \eg~\cite[Sections~12,~14]{Bil95}), which finishes the proof of~\ref{prop21}.
  \item\medskip
   Let $\mu$ has the form~\eqref{prop2:3} with $\nu\in\mr$. Then
   \begin{equation}\label{prop2:7}
    \mu=\J_h\nu=\sum_{n=0}^{\infty}\tau_h^n\nu=\nu+\sum_{n=1}^{\infty}\tau_h^n\nu\,.
   \end{equation}
   Using the definition of $\tau_h$ and $\tau_h^n$ we get
   \begin{align*}
    \tau_h(\J_h\nu)(B)&=\J_h\nu(B-h)=\sum_{n=0}^{\infty}\tau_h^n\nu(B-h)\\
    &=\sum_{n=0}^{\infty}\tau_h(\tau_h^n\nu)(B)
     =\sum_{n=0}^{\infty}\tau_h^{n+1}\nu(B)=\sum_{n=1}^{\infty}\tau_h^n\nu(B)\,.
   \end{align*}
   Therefore $\tau_h\,\mu=\sum_{n=1}^{\infty}\tau_h^n\nu$, which, together with~\eqref{prop2:7}, yields $\mu=\nu+\tau_h\,\mu$, which implies $\nu=\mu-\tau_h\,\mu$ and the proof of~\ref{prop22}.~is finished.\hfill\null\qed
 \end{enumerate}
\end{proof}

For $\nu\in\mr$ and $h_1,\dots,h_n>0$ denote $\J_{h_1h_2\dots\,h_n}\nu=\J_{h_1}\J_{h_2}\dots\J_{h_n}\nu$. As the immediate consequence of Proposition~\ref{prop2} we obtain
\begin{prop}\label{prop3}
 Let $h_1,\dots,h_n>0$.
 \begin{enumerate}[\upshape (a)]
  \item\label{prop3a}
   If $\nu\in\mr$ fulfils the condition $\J_{h_1\dots\,h_n}\nu\in\mr$, then $\nabla_{h_1\dots\,h_n}\bigl(\,\J_{h_1\dots\,h_n}\nu\bigr)=\nu$.
  \item\smallskip
   If $\mu\in\mr$ fulfils the condition $\nabla_{h_1\dots\,h_n}\,\mu\ge 0$, then $\J_{h_1\dots\,h_n}\bigl(\nabla_{h_1\dots\,h_n}\,\mu\bigr)=\mu$.
 \end{enumerate}
\end{prop}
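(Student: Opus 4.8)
The plan is to prove both parts by induction on $n$, using Proposition~\ref{prop2} both as the base case ($n=1$) and as the single engine of the inductive step. Two elementary observations are used repeatedly. First, for every nonnegative Borel measure $\lambda$ and every $h>0$ one has $\J_h\lambda\ge\lambda$, since the zeroth term of the series $\J_h\lambda=\sum_{k=0}^{\infty}\tau_h^k\lambda$ already equals $\lambda$ and every term is nonnegative. Second, for every $\mu\in\mr$ and $h>0$ the measure $\nabla_h\mu$ is nonnegative and belongs to $\mr$: its distribution function is $F_{\mu}(x)-F_{\mu}(x-h)$, which is nonnegative because $F_{\mu}$ is nondecreasing, and bounded above by $F_{\mu}(x)<\infty$. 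I also use freely that the operators $\tau_h$, $\nabla_h$, $\J_h$ (for possibly different increments) commute with one another, being built from the commuting translations; in particular $\nabla_{h_1\dots h_n}=\nabla_{h_1}\circ\nabla_{h_2\dots h_n}$ and $\J_{h_1\dots h_n}=\J_{h_1}\circ\J_{h_2\dots h_n}$, so that each inductive step only has to handle one extra increment.

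For part~(a), the base case $n=1$ is precisely the second part of Proposition~\ref{prop2}. For the step, take $\nu\in\mr$ with $\J_{h_1\dots h_n}\nu\in\mr$ and write $\J_{h_1\dots h_n}\nu=\J_{h_1}\bigl(\J_{h_2\dots h_n}\nu\bigr)$. By the first observation, $\J_{h_2\dots h_n}\nu\le\J_{h_1}\bigl(\J_{h_2\dots h_n}\nu\bigr)=\J_{h_1\dots h_n}\nu\in\mr$, so $\J_{h_2\dots h_n}\nu\in\mr$. The second part of Proposition~\ref{prop2}, applied with increment $h_1$ and with $\J_{h_2\dots h_n}\nu$ playing the role of $\nu$, then gives $\nabla_{h_1}\bigl(\J_{h_1\dots h_n}\nu\bigr)=\J_{h_2\dots h_n}\nu$. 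Hence
\[
 \nabla_{h_1\dots h_n}\bigl(\J_{h_1\dots h_n}\nu\bigr)=\nabla_{h_2\dots h_n}\Bigl(\nabla_{h_1}\bigl(\J_{h_1\dots h_n}\nu\bigr)\Bigr)=\nabla_{h_2\dots h_n}\bigl(\J_{h_2\dots h_n}\nu\bigr),
\]
and the inductive hypothesis, applied to the $n-1$ increments $h_2,\dots,h_n$ and the measure $\J_{h_2\dots h_n}\nu\in\mr$, identifies the right-hand side with $\nu$.

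For part~(b), the base case $n=1$ is precisely the first part of Proposition~\ref{prop2}. For the step, take $\mu\in\mr$ with $\nabla_{h_1\dots h_n}\mu\ge 0$ and set $\mu'=\nabla_{h_1}\mu$. By the second observation $\mu'\in\mr$, and $\nabla_{h_2\dots h_n}\mu'=\nabla_{h_1\dots h_n}\mu\ge 0$; so the inductive hypothesis, applied to $\mu'$ and the increments $h_2,\dots,h_n$, yields $\J_{h_2\dots h_n}\bigl(\nabla_{h_1\dots h_n}\mu\bigr)=\mu'=\nabla_{h_1}\mu$. Applying $\J_{h_1}$ to both sides, using $\J_{h_1\dots h_n}=\J_{h_1}\circ\J_{h_2\dots h_n}$ on the left, and on the right invoking the first part of Proposition~\ref{prop2} (which, since $\nabla_{h_1}\mu\ge 0$ by the second observation, gives $\J_{h_1}\bigl(\nabla_{h_1}\mu\bigr)=\mu$), one obtains $\J_{h_1\dots h_n}\bigl(\nabla_{h_1\dots h_n}\mu\bigr)=\mu$.

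Since Proposition~\ref{prop2} carries the real content, I do not expect any genuine obstacle here; the only points requiring a little care are the bookkeeping that every measure produced along the way is a bona fide nonnegative element of $\mr$ (which is exactly what the two observations secure), and the step of applying $\J_{h_1}$ to both sides of an equality of measures in part~(b) — this is legitimate because $\J_{h_1}$ is well defined on every nonnegative measure and one side of that equality is already known to be the element $\mu\in\mr$. Note that $\J$ is only ever applied to $\nabla_{h_1}\mu$ and to $\nabla_{h_1\dots h_n}\mu$, which are nonnegative — by the second observation and by hypothesis respectively — so no truly signed measure ever enters an application of $\J$.
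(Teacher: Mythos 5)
Your part (a) is correct and is exactly the route the paper intends (the paper offers no written proof, calling the proposition an immediate consequence of Proposition~\ref{prop2}, and your iteration of Proposition~\ref{prop2} with the monotonicity remark $\J_h\lambda\ge\lambda$ fills that in properly). The trouble is your second preliminary observation, on which all of part (b) rests: it is \emph{false} that $\nabla_h\mu\ge 0$ for every $\mu\in\mr$ and $h>0$. Nonnegativity of the numbers $F_\mu(x)-F_\mu(x-h)$ is not the same as nonnegativity of the set function $\nabla_h\mu$; for the latter you would need $x\mapsto F_\mu(x)-F_\mu(x-h)$ to be \emph{nondecreasing}. Concretely, $\tau_h\delta_0=\delta_h$, so $\nabla_h\delta_0=\delta_0-\delta_h$, which is negative on $\{h\}$ and is not an element of $\mr$. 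Consequently, in your inductive step for (b) you may neither apply the inductive hypothesis to $\mu'=\nabla_{h_1}\mu$ (that hypothesis requires $\mu'\in\mr$, i.e.\ a nonnegative measure), nor invoke part 1 of Proposition~\ref{prop2} to conclude $\J_{h_1}\bigl(\nabla_{h_1}\mu\bigr)=\mu$, since that too needs $\nabla_{h_1}\mu\ge 0$. The hypothesis $\nabla_{h_1\dots h_n}\mu\ge 0$ does not give you $\nabla_{h_1}\mu\ge 0$ up front: that intermediate nonnegativity is true, but only a posteriori, because $\nabla_{h_1}\mu=\J_{h_2\dots h_n}\bigl(\nabla_{h_1\dots h_n}\mu\bigr)$ --- i.e.\ it is part of what is being proved and cannot be assumed. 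Peeling the increments in the opposite order runs into the same wall (one would need $\nabla_{h_2\dots h_n}\mu\in\mr$, equally unknown).

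A repair that stays close to the paper is to argue at the level of distribution functions, as in the proof of Proposition~\ref{prop2}. For any finite signed combination $\sigma$ of translates of $\mu$ one has, by telescoping and Remark~\ref{rem1},
\begin{equation*}
 F_\sigma(x)=\sum_{j\ge 0}F_{\nabla_h\sigma}(x-jh)\,,
\end{equation*}
since the $N$--th partial sum equals $F_\sigma(x)-F_\sigma\bigl(x-(N+1)h\bigr)$. Iterating this over $h_1,\dots,h_n$ gives $F_\mu(x)=\sum_{j_1,\dots,j_n\ge 0}F_\nu\bigl(x-j_1h_1-\dots-j_nh_n\bigr)$ with $\nu=\nabla_{h_1\dots h_n}\mu$; because $\nu\ge 0$ by hypothesis, all terms in the innermost sums are nonnegative, so the nested series may be rearranged, and the right--hand side is precisely $F_{\J_{h_1\dots h_n}\nu}(x)$. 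Equality of distribution functions then yields $\mu=\J_{h_1\dots h_n}\nu$, and as a byproduct one recovers the nonnegativity of all the intermediate differences that your argument tried to assume. Alternatively, you could strengthen the inductive statement so that these partial nonnegativities are carried along --- but then they must be proved, not taken from the false observation.
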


\subsection{Preparation to the proof of Theorem~\ref{th:main}}

Fix $n\in\N$ and consider the Hamel basis $H\subset\R$ \st\ $h_1,\dots,h_{n+1}\in H$ are distinct and positive. We keep this convention throughout the whole section. Recall that if $x\in\R$, then $\delta_x$ denotes the~Dirac measure, \ie\ $\delta_x(B)=1$ if $x\in B$ and $\delta_x(B)=0$, $x\not\in B$, where $B\subset\R$. Define the measures $\mu_1,\dots,\mu_{n+1}\in\mr$ by
\begin{equation}\label{eq:9}
 \mu_i=\J_{h_1\dots\, h_{n+1}}\delta_{h_i}\,,\quad i=1,\dots,n+1\,.
\end{equation}
Then define the signed measure $\mu$ by
\begin{equation}\label{eq:10}
 \mu=\mu_2+\dots+\mu_{n+1}-\mu_1\,.
\end{equation}
Being the elements of the Hamel basis, $h_1,\dots,h_{n+1}$ are incommensurable, and it is not difficult to check the formula
\begin{equation}\label{eq:11}
 \mu_i=\sum_{j_1,\dots,j_{n+1}=0}^{\infty}\delta_{h_i+j_1h_1+\dots+j_{n+1}h_{n+1}}\,,\quad i=1,\dots,n+1\,.
\end{equation}

Next take the sets $A,A_1,\dots,A_{n+1}\subset\R$ defined by
\begin{align*}
 A_i&=\Biggl\{h_i+\sum_{\substack{j=1\\j\ne i}}^{n+1}\varepsilon_jh_j\;:\;\varepsilon_j\in\{0,1\},\;j=1,\dots,n+1\Biggr\}\,,
 \quad i=1,\dots,n+1\,,\\
 A&=A_1\cup\dots\cup A_{n+1}\,.
\end{align*}

We will use the frequent notation $\mu(x)=\mu\bigl(\{x\}\bigr)$.
\begin{lem}\label{lem4}
 Let $i\in\{1,\dots,n+1\}$. Then
 \begin{enumerate}[\upshape (a)]
  \item\label{lem4a}
   $\mu_i(x)=1$ for $x\in A_i$,
  \item\smallskip\label{lem4b}
   $\mu_i(x)=0$ for $x\in A\setminus A_i$,
  \item\smallskip
   $\mu|_A(x)<0\iff x=h_1$,
  \item\smallskip\label{lem4d}
   $\mu(h_1)=\mu_1(h_1)=-1$,
  \item\smallskip\label{lem4e}
   $\mu_+|_A=\mu|_A+\delta_{h_1}$.
 \end{enumerate}
\end{lem}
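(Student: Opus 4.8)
The plan rests on one structural fact: since $h_1,\dots,h_{n+1}$ are distinct elements of a Hamel basis, they are linearly independent over $\mathbb{Q}$, so every real number has \emph{at most one} representation $c_1h_1+\dots+c_{n+1}h_{n+1}$ with nonnegative integer coefficients. Reading this off~\eqref{eq:11}, $\mu_i(x)$ is the number of tuples $(j_1,\dots,j_{n+1})$ of nonnegative integers with $x=h_i+j_1h_1+\dots+j_{n+1}h_{n+1}$; by the uniqueness just noted this number is $0$ or $1$, and the useful criterion is: $\mu_i(x)=1$ exactly when $x=c_1h_1+\dots+c_{n+1}h_{n+1}$ for some nonnegative integers $c_1,\dots,c_{n+1}$ with $c_i\ge 1$. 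Part~(a) then falls out immediately, since by definition every $x\in A_i$ has precisely this shape with $c_i=1$.

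For part~(b) the crux is an \emph{absorption} remark I would isolate first: if $x\in A_{i'}$ for some $i'\ne i$ and the coefficient of $h_i$ in the unique representation of $x$ is positive, then in fact $x\in A_i$. Indeed, writing $x=h_{i'}+\sum_{j\ne i'}\varepsilon_jh_j$ with $\varepsilon_j\in\{0,1\}$, that coefficient equals $\varepsilon_i$; if $\varepsilon_i=1$ then $x=h_i+h_{i'}+\sum_{j\ne i,i'}\varepsilon_jh_j$, which displays $x$ as a member of $A_i$. To prove part~(b) I would argue by contradiction: if $x\in A\setminus A_i$ and $\mu_i(x)\ne 0$, choose $i'$ with $x\in A_{i'}$; then $i'\ne i$ because $x\notin A_i$, while $\mu_i(x)=1$ forces the coefficient of $h_i$ in $x$ to be $\ge 1$, so the absorption remark gives $x\in A_i$ — a contradiction. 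Part~(d) then follows by a short count: $h_1\in A_1$ gives $\mu_1(h_1)=1$ by part~(a), whereas for $i\ge 2$ one has $h_1\in A\setminus A_i$ (the coefficient of $h_i$ in $h_1$ is $0$, so $h_1\notin A_i$), hence $\mu_i(h_1)=0$ by part~(b); by~\eqref{eq:10} this yields $\mu(h_1)=\sum_{i=2}^{n+1}\mu_i(h_1)-\mu_1(h_1)=-1$.

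For part~(c), in view of part~(d) it suffices to verify $\mu(x)\ge 0$ for every $x\in A$ with $x\ne h_1$. Given such an $x$, I would pick $i'$ with $x\in A_{i'}$. If $i'\ge 2$, then $\mu_{i'}(x)=1$ by part~(a) and, since every $\mu_i$ is nonnegative and $\mu_1(x)\le 1$, one gets $\mu(x)\ge\mu_{i'}(x)-\mu_1(x)\ge 0$. If $i'=1$, then $x=h_1+\sum_{j=2}^{n+1}\varepsilon_jh_j$ with some $\varepsilon_{j_0}=1$, $j_0\ge 2$ (otherwise $x=h_1$); rewriting $x=h_{j_0}+h_1+\sum_{j\ne 1,j_0}\varepsilon_jh_j$ shows $x\in A_{j_0}$, reducing to the previous case. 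This gives $\mu|_A(x)<0$ precisely for $x=h_1$. Finally part~(e) drops out: all measures here are purely atomic with integer masses, so the positive part satisfies $\mu_+(x)=\max\{\mu(x),0\}$; by part~(c) this equals $\mu(x)$ for $x\in A\setminus\{h_1\}$ and equals $0=\mu(h_1)+1$ at $x=h_1$, so, $A$ being finite, a comparison on singletons gives $\mu_+|_A=\mu|_A+\delta_{h_1}$.

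The only genuinely delicate step is the absorption remark used in part~(b) (and reused in part~(c)); everything else is bookkeeping with unique $\mathbb{Q}$-representations. The point to be careful about is that $\mu_i(x)\ne 0$ only yields, a priori, a representation of $x$ with nonnegative \emph{integer} coefficients, and it is exactly the comparison with the $\{0,1\}$-valued $A_{i'}$-representation — which forces the $h_i$-coefficient $j_i+1$ to equal $\varepsilon_i\in\{0,1\}$, hence $j_i=0$ and $\varepsilon_i=1$ — that lets the absorption trick apply.
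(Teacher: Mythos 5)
Your proposal is correct and is exactly the ``standard and easy'' verification the paper omits: it reads $\mu_i(x)$ off \eqref{eq:11} as a count of representations, uses the $\mathbb{Q}$--linear independence of the Hamel elements to get uniqueness, and then derives (a)--(e) by the bookkeeping the authors had in mind via \eqref{eq:9}--\eqref{eq:11}. Note only that you have (correctly) interpreted item \itemref{lem4d} as $\mu(h_1)=-\mu_1(h_1)=-1$, since $\mu_1$ is a nonnegative measure and $\mu_1(h_1)=1$; this is clearly the intended meaning of the statement.
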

\begin{proof}
 It is enough to use \eqref{eq:9}, \eqref{eq:10}, \eqref{eq:11}. We omit a~standard and easy proof.\hfill\null\qed
\end{proof}

Recall that in Theorem~\ref{th:main} we defined the function $f(x)=\bigl(a(x)\bigr)_+^n$ ($x\in\R$), where $a:\R\to\R$ is the additive function \st\ $a(h_1)=-1$, $a(h_2)=\dots=a(h_{n+1})=1$. Now we prove the crucial property of this function~$f$. Let us notice that the function~$f$ could be, of course, defined for any $n\in\N$ and the result below is not dependent on evenness of~$n$.
\begin{thm}\label{th5}
 Let $f:\R\to\R$ be defined as above and $\mu$ be a~signed measure given by~\eqref{eq:10}. Then
 \begin{equation}\label{eq:12}
  f(x)=(\,\mu+\delta_{h_1})^n(x)\quad\text{ for every }x\in A\,.
 \end{equation}
 In particular,
 \begin{equation}\label{eq:13}
  \nabla_{h_1\dots\,h_{n+1}}\,f(h_1+\dots+h_{n+1})=\nabla_{h_1\dots\,h_{n+1}}(\,\mu+\delta_{h_1})^n(h_1+\dots+h_{n+1})\,.
 \end{equation}
\end{thm}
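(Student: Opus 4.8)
The plan is to establish the pointwise identity~\eqref{eq:12} on the set~$A$ and then observe that~\eqref{eq:13} follows automatically, because the backward difference $\nabla_{h_1\dots\,h_{n+1}}$ evaluated at $h_1+\dots+h_{n+1}$ only involves the values of its argument at the points $h_1+\dots+h_{n+1}-\varepsilon_1h_1-\dots-\varepsilon_{n+1}h_{n+1}$ with $\varepsilon_j\in\{0,1\}$, and, by the incommensurability of the Hamel vectors, each such point lies in~$A$ (indeed in $A_i$ for a suitable~$i$, except the point~$0$ which contributes a term equal to~$f(0)=(\mu+\delta_{h_1})^n(0)=0$ on both sides). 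So the whole statement reduces to~\eqref{eq:12}.

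To prove~\eqref{eq:12}, fix $x\in A$. By Lemma~\ref{lem4}\itemref{lem4e} we have $(\mu+\delta_{h_1})|_A=\mu_+|_A\ge 0$, so $(\mu+\delta_{h_1})^n(x)$ is genuinely the $n$-th power of a nonnegative number, namely of $\mu_+(x)$. Hence I must show $\bigl(a(x)\bigr)_+^n=\bigl(\mu_+(x)\bigr)^n$ for $x\in A$, for which it suffices to check $\bigl(a(x)\bigr)_+=\mu_+(x)$, \ie\ that $a(x)_+$ coincides with the value $\mu(x)+\delta_{h_1}(x)$. Now each $x\in A$ has the form $x=h_i+\sum_{j\ne i}\varepsilon_jh_j$ for some~$i$ and some $\varepsilon_j\in\{0,1\}$; by additivity and the defining values of~$a$,
\[
 a(x)=a(h_i)+\sum_{\substack{j=1\\j\ne i}}^{n+1}\varepsilon_ja(h_j)=
 \begin{cases}
  -1+\sum_{j\ge 2,\,j\ne i}\varepsilon_j, & i\ne 1 \text{ (so $a(h_i)=1$ was already counted)},\\
  \sum_{j\ge 2}\varepsilon_j-1, & \text{general bookkeeping},
 \end{cases}
\]
the point being simply that $a(x)$ equals (number of coordinates~$h_2,\dots,h_{n+1}$ present in the representation of~$x$) minus (the coefficient of~$h_1$, which is $1$ if $h_1$ is present and $0$ otherwise). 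Thus $a(x)\ge 0$ exactly when $x\ne h_1$ among points of~$A$, in which case $a(x)_+=a(x)$; and $a(h_1)_+=0$.

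It remains to match this with $\mu_+(x)$. By Lemma~\ref{lem4}\itemref{lem4a}--\itemref{lem4b}, for $x\in A$ the quantity $\mu_i(x)$ is the indicator of $x\in A_i$, so $\mu(x)=\sum_{i=2}^{n+1}\mathbf 1_{A_i}(x)-\mathbf 1_{A_1}(x)$; and a point of the form $h_k+\sum_{j\ne k}\varepsilon_jh_j$ lies in $A_i$ precisely when $i=k$ or $\varepsilon_i=1$, so $\sum_{i=2}^{n+1}\mathbf 1_{A_i}(x)$ counts exactly the coordinates $h_2,\dots,h_{n+1}$ appearing in~$x$ and $\mathbf 1_{A_1}(x)$ is the coefficient of~$h_1$. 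Hence $\mu(x)=a(x)$ for $x\in A$, and therefore $(\mu+\delta_{h_1})(x)=a(x)+\delta_{h_1}(x)$, which by Lemma~\ref{lem4}\itemref{lem4d}--\itemref{lem4e} equals $a(x)_+$ (the only correction is at $x=h_1$, where $a(h_1)+1=0=a(h_1)_+$, and off $A$-points $h_1$ the indicator $\delta_{h_1}$ vanishes while $a(x)=\mu_+(x)\ge 0$). Raising to the $n$-th power gives~\eqref{eq:12}, and then~\eqref{eq:13} follows by applying $\nabla_{h_1\dots\,h_{n+1}}$ as described. The main obstacle, such as it is, is purely combinatorial bookkeeping: keeping straight that ``$a(x)=$ signed count of Hamel coordinates'' agrees on~$A$ with ``$\mu(x)=$ signed count of the sets~$A_i$ containing~$x$,'' and checking the single exceptional point $x=h_1$ where the $+$-part and the Dirac correction $\delta_{h_1}$ interact; the incommensurability of $h_1,\dots,h_{n+1}$ (\cf~\eqref{eq:11}) is what guarantees there are no accidental coincidences spoiling these counts.
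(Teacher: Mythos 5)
Your proof is correct and takes essentially the same route as the paper: the core is the identity $a(x)=\mu(x)$ for $x\in A$ (the paper's~\eqref{eq:14}, established there by the same Hamel--coordinate counting, merely organized as two cases instead of indicators), combined with Lemma~\ref{lem4}~\itemref{lem4e}, and your explicit verification of~\eqref{eq:13}, including the evaluation point~$0$, is a detail the paper leaves implicit. The only blemish is the muddled two--case display for $a(x)$, but the verbal description immediately following it (signed count of Hamel coordinates, with $h_1$ counted negatively) is correct and is all the argument actually uses.
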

\begin{proof}
 To prove~\eqref{eq:12} it is enough to show that
 \begin{equation}\label{eq:14}
  a(x)=\mu(x)\quad\text{ for every }x\in A\,.
 \end{equation}
 Indeed, then for any $x\in A$ we have $a_+(x)=\mu_+(x)$ and trivially $f(x)=\bigl(a_+(x)\bigr)^n=\bigl(\,\mu_+(x)\bigr)^n$. Taking into account Lemma~\ref{lem4}~\itemref{lem4e} we get~\eqref{eq:12}.
 \par\medskip
 To prove~\eqref{eq:14} fix $x\in A$. Then $x=\varepsilon_1h_1+\dots+\varepsilon_{n+1}h_{n+1}$, where $\varepsilon_i\in\{0,1\}$, $i=1,\dots,n+1$ and $\varepsilon_1+\dots+\varepsilon_{n+1}>0$. Two cases are possible.
 \par\medskip\noindent
 \textsc{C{a}se} 1. $\varepsilon_1=1$
 \par\medskip
  If $\varepsilon_2=\dots=\varepsilon_{n+1}=0$, then $a(x)=a(h_1)=-1$ and by Lemma~\ref{lem4}~\itemref{lem4d} $\mu(x)=\mu(h_1)=-1$, so~\eqref{eq:14} holds. If $\varepsilon_j\ne 0$ for some $j\in\{2,\dots,n+1\}$, then without loss of generality we may assume that $x=h_1+\dots+h_k$ for some $k\in\{2,\dots,n+1\}$. Since $x\in A_1\cap\dots\cap A_k$ and $x\not\in A_{k+1},\dots,x\not\in A_{n+1}$, we have by Lemma~\ref{lem4}~\itemref{lem4a},~\itemref{lem4b}
  \[
   \mu_1(x)=\dots=\mu_k(x)=1\,,\quad\mu_{k+1}(x)=\dots=\mu_{n+1}(x)=0.
  \]
  Hence, by virtue of~\eqref{eq:10}, $\mu(x)=k-2$. By additivity
  \[
   a(x)=a(h_1+\dots+h_k)=a(h_1)+\dots+a(h_k)=k-2\,,
  \]
  which proves that $a(x)=\mu(x)$.
  \par\medskip\noindent
 \textsc{C{a}se} 2. $\varepsilon_1=0$
 \par\medskip
 Without loss of generality we may assume that $x=h_2+\dots+h_k$ for some $k\in\{2,\dots,n+1\}$. Arguing exactly in the same way as in the previous case, we arrive at $\mu(x)=k-1=a(x)$.\hfill\null\qed
\end{proof}

\subsection{Proof of Theorem~\ref{th:main}}

Theorem~\ref{th5} allows us to work with measures instead of the original function~$f$. We present below three useful formulas. We will prove them after the proof of Theorem~\ref{th:main}.
\begin{lem}\label{lem_useful}
 Let $\mu=\mu_2+\dots+\mu_{n+1}-\mu_1$ be the signed measure given by~\eqref{eq:10}. Then
 \begin{align}
  &(\,\mu+\delta_{h_1})^n(x)=\mu^n(x)-(-1)^n\delta_{h_1}(x)\quad\text{for any }x\in A\,,\label{lem7}\\
  &\nabla_{h_1\dots\,h_{n+1}}\,\mu^n(h_1+\dots+h_{n+1})=0\,,\label{lem8}\\
  &\nabla_{h_1\dots\,h_{n+1}}\delta_{h_1}(h_1+\dots+h_{n+1})=(-1)^n\,.\label{lem10}
 \end{align}
\end{lem}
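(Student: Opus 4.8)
The plan is to prove the three identities separately, in the order \eqref{lem7}, \eqref{lem10}, \eqref{lem8}; the first two are just bookkeeping with the definitions and only the last one carries the real content. Throughout I keep the convention already in force that $\mu(x)=\mu(\{x\})$, so that $\mu^n(x)$, $(\mu+\delta_{h_1})^n(x)$ and $\delta_{h_1}(x)$ are ordinary powers and values of point masses.

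To obtain \eqref{lem7} I would split into the cases $x\in A\setminus\{h_1\}$ and $x=h_1$. In the first case $\delta_{h_1}(x)=0$, so $(\mu+\delta_{h_1})(x)=\mu(x)$ and both sides equal $\mu^n(x)$. In the second, Lemma~\ref{lem4}~\itemref{lem4d} gives $\mu(h_1)=-1$, hence $(\mu+\delta_{h_1})(h_1)=0$, the left side is $0$, and the right side is $(-1)^n-(-1)^n=0$. The same computation shows that \eqref{lem7} also holds at $x=0$, which carries no mass of $\mu$, $\delta_{h_1}$ or their sum by \eqref{eq:11}; this harmless extension is what one actually uses when inserting \eqref{lem7} into \eqref{eq:13}. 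For \eqref{lem10} I would expand, using \eqref{eq:nabla_n} and \eqref{eq:delta_n}, $\nabla_{h_1\dots h_{n+1}}\delta_{h_1}(h_1+\dots+h_{n+1})=\Delta_{h_1\dots h_{n+1}}\delta_{h_1}(0)=\sum_{S\subseteq\{1,\dots,n+1\}}(-1)^{n+1-|S|}\delta_{h_1}\bigl(\sum_{j\in S}h_j\bigr)$; since $h_1,\dots,h_{n+1}$ are incommensurable (distinct elements of a Hamel basis), the equality $\sum_{j\in S}h_j=h_1$ holds only for $S=\{1\}$, with $|S|=1$, so the sum collapses to $(-1)^n$.

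Formula \eqref{lem8} is the crux. I would expand in the same way,
\[
 \nabla_{h_1\dots h_{n+1}}\mu^n(h_1+\dots+h_{n+1})=\sum_{S\subseteq\{1,\dots,n+1\}}(-1)^{n+1-|S|}\,\mu^n\Bigl(\sum_{j\in S}h_j\Bigr),
\]
and then replace $\mu$ by the additive function $a$ at each evaluation point. Indeed, for $S\ne\emptyset$ we have $\sum_{j\in S}h_j\in A$, hence $\mu(\sum_{j\in S}h_j)=a(\sum_{j\in S}h_j)$ by \eqref{eq:14} (equivalently, by Lemma~\ref{lem4} and additivity of $a$); and for $S=\emptyset$ both $\mu(\{0\})$ and $a(0)$ vanish. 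Therefore the right-hand side equals $\Delta_{h_1\dots h_{n+1}}(a^n)(0)$, where $a^n$ denotes the function $x\mapsto a(x)^n$. Since $a$ is additive, $a^n$ --- a product of $n$ additive functions --- is a polynomial function of degree at most $n$, so $\Delta_{h_1\dots h_{n+1}}(a^n)\equiv 0$ (\cf~\cite[Ch.~15]{Kuc09}); this gives \eqref{lem8}.

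The hard part is precisely this last replacement: one must notice that although $\mu$ itself is a complicated signed measure, the function $\mu^n$ coincides with the genuine polynomial function $a^n$ at \emph{every} node $\sum_{j\in S}h_j$, $S\subseteq\{1,\dots,n+1\}$ (the node $0$ included), and then invoke correctly the fact that $\Delta_{h_1\dots h_{n+1}}$ annihilates polynomial functions of degree at most $n$. Once \eqref{lem7}, \eqref{lem8}, \eqref{lem10} are available, Theorem~\ref{th:main} drops out of \eqref{eq:13} by linearity of $\nabla_{h_1\dots h_{n+1}}$: one gets $\nabla_{h_1\dots h_{n+1}}f(h_1+\dots+h_{n+1})=0-(-1)^n\cdot(-1)^n=-1$, whence $\Delta_{h_1\dots h_{n+1}}f(0)=-1<0$ by \eqref{eq:nabla_n}, so $f$ is not $n$--Wright--convex.
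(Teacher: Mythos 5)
Your proposal is correct, and for the key identity \eqref{lem8} it takes a genuinely different route from the paper. Your proofs of \eqref{lem7} and \eqref{lem10} agree in substance with the paper's: the paper derives \eqref{lem7} by a binomial expansion together with the splitting $\mu=\lambda-\mu_1$, $\lambda=\mu_2+\dots+\mu_{n+1}$, but since everything is evaluated at single points this reduces to exactly your two-case check ($x=h_1$ with $\mu(h_1)=-1$, and $x\in A\setminus\{h_1\}$), and \eqref{lem10} is proved there exactly as you do. For \eqref{lem8}, however, the paper expands $\mu^n(x)$ by the Multinomial Theorem, identifies each product $\mu_{j_1}(x)\cdot\dotsc\cdot\mu_{j_k}(x)$ with $\J_{h_1\dots\,h_{n+1}}\delta_{h_{j_1}+\dots+h_{j_k}}(x)$ (formula \eqref{eq:22}), applies the inversion formula $\nabla_{h_1\dots\,h_{n+1}}\bigl(\J_{h_1\dots\,h_{n+1}}\delta_y\bigr)=\delta_y$ of Proposition~\ref{prop3}~\itemref{prop3a}, and concludes because none of the resulting Dirac points can equal $h_1+\dots+h_{n+1}$ (at most $n$ of the $\varepsilon_k$ are nonzero). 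You instead observe that at all $2^{n+1}$ nodes $\sum_{j\in S}h_j$ (including $0$, where both sides vanish) the function $x\mapsto\mu(x)^n$ coincides with $x\mapsto a(x)^n$, the diagonal of an $n$--additive symmetric map, hence a polynomial function of order~$n$ annihilated by $\Delta_{h_1\dots\,h_{n+1}}$ (\cf~\cite{Kuc09}, Ch.~15). Both arguments ultimately rest on the same combinatorial mechanism (every term involves at most $n$ of the increments, so the alternating sum over the remaining ones vanishes), but yours is shorter and more elementary: it bypasses the operator $\J_h$ and Propositions~\ref{prop2} and~\ref{prop3} altogether, and in effect shows that the counterexample can be verified without the measure apparatus, whereas the paper's route is designed to exhibit that measure-theoretic toolkit. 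Your side remarks are also correct and even patch points the paper passes over silently: the validity of \eqref{lem7} (and of the identification with $a^n$) at the node $x=0\notin A$, which is needed once these identities are inserted under the $(n+1)$--fold difference operator, and the fact that $\mu(\{0\})=a(0)=0$.
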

\begin{proof_main}
 Recall that $n\in\N$ was odd and we have chosen the Hamel basis $H\subset\R$ \st\ $h_1,\dots,h_{n+1}\in H$ were positive. We took the additive function $a:\R\to\R$ \st\ $a(h_1)=-1$ and $a(h_2)=\dots=a(h_{n+1})=1$. Then we defined the function $f:\R\to\R$ by $f(x)=\bigl(a(x)\bigr)_+^n$ and we have shown that $f$~is $n$--Jensen--convex. It was left to prove that $f$~is not $n$--Wright--convex. To show it it is enough to check that $\Delta_{h_1\dots\,h_{n+1}}\,f(0)=-1$. By~\eqref{eq:nabla_n} it is equivalent to $\nabla_{h_1\dots\,h_{n+1}}\,f(h_1+\dots+h_{n+1})=-1$. Using~\eqref{eq:13} and~\eqref{lem7} we obtain
 \begin{multline*}
  \nabla_{h_1\dots\,h_{n+1}}\,f(h_1+\dots+h_{n+1})=\nabla_{h_1\dots\,h_{n+1}}(\,\mu+\delta_{h_1})^n(h_1+\dots+h_{n+1})\\
  =\nabla_{h_1\dots\,h_{n+1}}\Bigl(\mu^n(h_1+\dots+h_{n+1})-(-1)^n\delta_{h_1}(h_1+\dots+h_{n+1})\Bigr)\\
  =\nabla_{h_1\dots\,h_{n+1}}\,\mu^n(h_1+\dots+h_{n+1})+\nabla_{h_1\dots\,h_{n+1}}\delta_{h_1}(h_1+\dots+h_{n+1})=-1
 \end{multline*}
 due to~\eqref{lem8} and~\eqref{lem10}. This finishes the proof.\hfill\null\qed
\end{proof_main}

\subsection{Proof of Lemma~\ref{lem_useful}}

\begin{proof7}
 Let $x\in A$. Of course $\delta_{h_1}^j=\delta_{h_1}$ ($j\in\N$). Therefore
 \begin{equation}\label{eq:16}
  \begin{aligned}
   (\,\mu+\delta_{h_1})^n(x)&=\mu^n(x)+\delta_{h_1}^n(x)
                             +\sum_{k=1}^{n-1}\binom{n}{k}\,\mu^k(x)\,\delta_{h_1}^{n-k}(x)\\
                            &=\mu^n(x)+\delta_{h_1}(x)
                             +\sum_{k=1}^{n-1}\binom{n}{k}\,\mu^k(x)\,\delta_{h_1}(x)\,.
  \end{aligned}
 \end{equation}
 Put $\lambda=\mu_2+\dots+\mu_{n+1}$. Then $\mu=\lambda-\mu_1$ and for $k=1,\dots,n-1$ we get
 \begin{equation}\label{eq:17}
  \mu^k(x)=\sum_{j=1}^{k-1}\binom{k}{j}\,\lambda^j(x)\Bigl(-\mu_1^{k-j}(x)\Bigr)
  +\lambda^k(x)+(-1)^k\mu_1^k(x)\,.
 \end{equation}
 It is easy to see that
 \begin{enumerate}[a)]
  \item
   $\lambda^j(x)\,\delta_{h_1}(x)=0$, $j=1,\dots,k$,
  \item
   $\mu_1^k(x)\,\delta_{h_1}(x)=\delta_{h_1}(x)$.
 \end{enumerate}
 For, notice that $\lambda$ is concentrated on the set $A_2\cap\dots\cap A_{n+1}$ (see~\ref{lem4}~\itemref{lem4a},~\itemref{lem4b}), which gives~a), while~b) is trivial. Then~\eqref{eq:17} yields
 \[
  \mu^k(x)\,\delta_{h_1}(x)=(-1)^k\delta_{h_1}(x)\,,\quad k=1,\dots,n-1\,,
 \]
 and, consequently,
 \begin{align*}
  \sum_{k=1}^{n-1}\binom{n}{k}\,\mu^k(x)\,\delta_{h_1}(x)&=
   \sum_{k=1}^{n-1}\binom{n}{k}(-1)^k\delta_{h_1}(x)
   =\delta_{h_1}(x)\Biggl[\,\sum_{k=1}^{n-1}\binom{n}{k}(-1)^k\Biggr]\\
   &=\delta_{h_1}(x)\Biggl[\,\sum_{k=0}^n\binom{n}{k}(-1)^k-\binom{n}{0}(-1)^0-\binom{n}{n}(-1)^n\Biggr]\\
   &=\delta_{h_1}(x)\bigl[\,0-1-(-1)^n\bigr]\,.
 \end{align*}
 We conclude the proof putting this last equation into~\eqref{eq:16}.\hfill\null\qed
\end{proof7}

\begin{proof8}
 Let $x\in A$. Applying the Multinomial Theorem to $\mu=\mu_2+\dots+\mu_{n+1}-\mu_1$ we arrive at
 \begin{equation}\label{eq:20}
  \mu^n(x)=\sum_{j_1+\dots+j_{n+1}=n}\binom{n}{j_1,j_2,\dots,j_{n+1}}\,
  \mu_2^{j_2}(x)\cdot\dotsc\cdot\mu_{n+1}^{j_{n+1}}(x)\cdot\Bigl(-\mu_1^{j_1}(x)\Bigr)\,,
 \end{equation}
 where
 \[
  \binom{n}{j_1,j_2,\dots,j_{n+1}}=\frac{n!}{j_1!\cdot j_2!\cdot\dotsc\cdot j_{n+1}!}
 \]
 are the \emph{multinomial coefficients}. Due to Lemma~\ref{lem4}~\itemref{lem4a},~\itemref{lem4b} we have
 \begin{equation}\label{eq:21}
  \mu_j^k(x)=
  \begin{cases}
   \mu_j(x)&\text{for }k=1,2,\dots\,,\\
   1&\text{for }k=0
  \end{cases}
 \end{equation}
 with the convention $0^0=1$. Next we will prove that
 \begin{equation}\label{eq:22}
  \mu_{j_1}(x)\cdot\mu_{j_2}(x)\cdot\dotsc\cdot\mu_{j_k}(x)=\J_{h_1\dots\,h_{n+1}}\delta_{h_{j_1}+\dots+h_{j_k}}(x)\,.
 \end{equation}
 For simplicity we will only check that
 \begin{equation}\label{eq:22a}
  \mu_1(x)\cdot\mu_2(x)=\J_{h_1\dots\,h_{n+1}}\delta_{h_1+h_2}(x)\,,
 \end{equation}
 the proof in the general case is analogous. By~\eqref{eq:11}
 \begin{align*}
  \mu_1(x)&=\sum_{j_1,\dots,j_{n+1}=0}^{\infty}\delta_{h_1+j_1h_1+\dots+j_{n+1}h_{n+1}}(x)\,,\\
  \mu_2(x)&=\sum_{j_1,\dots,j_{n+1}=0}^{\infty}\delta_{h_2+j_1h_1+\dots+j_{n+1}h_{n+1}}(x)\,.\\
 \end{align*}
 If $x\in A_1\cap A_2$, then $x=h_1+h_2+j_1h_1+\dots+j_{n+1}h_{n+1}$. Because $h_1,\dots,h_{n+1}$ belong to the Hamel basis, this representation is unique and both the above sums are equal to~$1$. By the same argument, also
 \[
  \J_{h_1\dots\,h_{n+1}}\delta_{h_1+h_2}(x)=\sum_{j_1,\dots,j_{n+1}=0}^{\infty}\delta_{h_1+h_2+j_1h_1+\dots+j_{n+1}h_{n+1}}(x)=1
 \]
 By Lemma~\ref{lem4}~\eqref{lem4a} we infer that $\mu_1(x)\mu_2(x)=1$ and~\eqref{eq:22a} holds. The remaining case $x\in A\setminus (A_1\cup A_2)$ we handle in the similar way, using also Lemma~\ref{lem4}~\eqref{lem4b}.
 \par\medskip
 Taking into account~\eqref{eq:21} and~\eqref{eq:22} we obtain that
 \begin{equation}\label{eq:23}
  \mu_2^{j_2}(x)\cdot\dotsc\cdot\mu_{n+1}^{j_{n+1}}(x)\cdot\Bigl(-\mu_1(x)^{j_1}\Bigr)
  =(-1)^{j_1}\J_{h_1\dots\,h_{n+1}}\delta_{\varepsilon_2h_2+\dots+\varepsilon_{n+1}h_{n+1}+\varepsilon_1h_1}(x)\,,
 \end{equation}
 where
 \begin{equation}\label{eq:23a}
  \varepsilon_k=
  \begin{cases}
   0&\text{for }j_k=0\,,\\
   1&\text{for }j_k>0
  \end{cases}
 \end{equation}
 for $k=1,\dots,n+1$.
 By Proposition~\ref{prop3}~\itemref{prop3a} we have
 \begin{equation}\label{eq:24}
  \nabla_{h_1\dots\,h_{n+1}}\bigl(\,\J_{h_1\dots\,h_{n+1}}
   \delta_{\varepsilon_2h_2+\dots+\varepsilon_{n+1}h_{n+1}+\varepsilon_1h_1}\bigr)(x)
   =\delta_{\varepsilon_2h_2+\dots+\varepsilon_{n+1}h_{n+1}+\varepsilon_1h_1}(x)\,.
 \end{equation}
 Consequently, by \eqref{eq:20}, \eqref{eq:23} and \eqref{eq:24} we get
 \begin{equation}\label{eq:25}
  \nabla_{h_1\dots\,h_{n+1}}\,\mu^n(x)=
  \sum_{j_1+\dots+j_{n+1}=n}\binom{n}{j_1,j_2,\dots,j_{n+1}}\,
  (-1)^{j_1}\delta_{\varepsilon_1h_1+\dots+\varepsilon_{n+1}h_{n+1}}(x)
 \end{equation}
 Observe that for $x=h_1+\dots+h_{n+1}$ there is $\varepsilon_1=\dots=\varepsilon_{n+1}=1$, so, by~\eqref{eq:23a}, $j_1+\dots+j_{n+1}\ge n+1$ and in the sum~\eqref{eq:25} there is no the component $\delta_{h_1+\dots+h_{n+1}}(h_1+\dots+h_{n+1})$. Because $h_1,\dots,h_{n+1}$ belong to the Hamel basis, every component of this sum equals~$0$, so $\nabla_{h_1\dots\,h_{n+1}}\,\mu^n(h_1+\dots+h_{n+1})=0$ and the formula~\eqref{lem8} is true.\hfill\null\qed
\end{proof8}

\begin{proof10}
 By~\eqref{eq:nabla_n} we have
 \[
  \nabla_{h_1\dots\,h_{n+1}}\,\delta_{h_1}(h_1+\dots+h_{n+1})=\Delta_{h_1\dots\,h_{n+1}}\,\delta_{h_1}(0)\,.
 \]
We compute this term using~\eqref{eq:delta_n}. Notice that (by the choice of $h_1,\dots,h_{n+1}$ as distinct elements of the Hamel basis) the only non--zero component of the sum occurring there is $(-1)^n\delta_{h_1}(h_1)=(-1)^n$, which appears in the penultimate line.\hfill\null\qed
\end{proof10}


\end{document}